\documentclass[11pt]{article}

\usepackage{amsmath,amsthm,amssymb,graphicx}
\usepackage{graphicx,float}
\usepackage[font=small]{caption}
\usepackage{enumitem}

\usepackage[usenames,dvipsnames,svgnames,table]{xcolor}

\usepackage{fullpage}

\title{\textbf{Spatial structure of Sinai-Ruelle-Bowen measures}}
\author{N.~Chernov, A.~Korepanov \\
University of Alabama at Birmingham\\
1300 University Boulevard\\
Birmingham, AL 35294-1170\\
USA}
\date{June 23, 2014}

\theoremstyle{plain}
\newtheorem{lemma}{Lemma}[section]

\theoremstyle{plain}
\newtheorem{theorem}{Theorem}[section]

\theoremstyle{plain}

\theoremstyle{definition}

\theoremstyle{remark}
\newtheorem{remark}{Remark}[section]

\newcommand{\eps}{\varepsilon}
\def\bJ{\mathbf{J}}
\def\bE{\mathbf{E}}
\def\cD{\mathcal{D}}
\def\cX{\mathcal{X}}
\def\cM{\mathcal{M}}
\def\cF{\mathcal{F}}
\def\cG{\mathcal{G}}
\def\cZ{\mathcal{Z}}
\def\bF{\mathbf{F}}
\def\bE{\mathbf{E}}
\def\bp{\mathbf{p}}

\begin{document}

\maketitle

\begin{abstract}
Sinai-Ruelle-Bowen measures are the only physically observable
invariant measures for billiard dynamical systems under small
perturbations. These measures are singular, but as it was noted in
\cite{BCKL}, marginal distributions of spatial and angular
coordinates are absolutely continuous. We generalize these facts and
provide full mathematical proofs.

\end{abstract}

\section{Introduction}

This work is motivated by our earlier studies \cite{BCKL} of physically
observable properties of Sinai-Ruelle-Bowen (SRB) measures for 2D
periodic Lorentz gases (Sinai billiards) with finite horizon, under
small perturbations.

Sinai billiards and their perturbations have some rough
singularities, but on the other hand they are strongly hyperbolic
\cite{Ch1}. Precisely, their Lyapunov exponents are non-zero and
unstable vectors grow uniformly, at a rate $\geq C\Lambda^n$, where
$n$ is the collision counter and $\Lambda>1$ the so called
hyperbolicity constant. While Sinai billiards are equilibrium
systems preserving a smooth (Liouville) measure, their perturbations
are non-equilibrium systems whose natural invariant measures (steady
states) are usually singular -- they are SRB measures.

An SRB measure is an ergodic invariant probability measure with
absolutely continuous conditional distributions on unstable
manifolds. SRB measures are the only physically observable measures
because their basins of attraction have positive Lebesgue volume;
see \cite{Y} and \cite[Sect.~5.9]{H}. Perturbations of Sinai
billiards have unique SRB measures, which are mixing and Bernoulli
\cite{Ch1,Ch2}.

In \cite{BCKL} we studied particularly interesting perturbations of
Sinai billiards where the particle moved under a small constant
external field $\bE$ subject to a Gaussian thermostat that kept its
speed constant. For that model, the SRB measure was constructed long
ago \cite{CELS,CELSp}, and it was proved that the (global) current
$\bJ$ of the particle satisfied $\bJ = \sigma\bE+o(|\bE|)$, with the
conductivity $\sigma$ given by a standard Green-Kubo formula.

In that model, as well as in many other perturbations of Sinai
billiards, the SRB measure is singular with respect to the Lebesgue
measure (the Hausdorff dimension of the SRB measure is lower than
that of the phase space \cite{CELS}). It is also argued in
statistical mechanics that many multiparticle systems (gases and
fluids) under external forces develop nonequilibrium steady states
that behave as SRB measures, in particular they are singular with
respect to the phase volume (this is known as ``chaotic hypothesis''
or ``Axiom C'' \cite{GC}).

However one rarely observes the a steady state on the entire phase
space to see its singularity. Usually one observes selected
variables, such as positions or velocities of certain moving
particle(s). And computer simulations show that those selected
variables have surprisingly continuous distributions. In \cite{BCKL}
we showed that for the Lorentz gas model three selected variables --
the local particle density, the local current, and the angular
velocity distribution have continuous densities. We derived
Green-Kubo type formulas for those densities.

In \cite{BCKL} we only sketched the arguments and presented numerical
evidence supporting our conclusions. In this paper we provide full
mathematical proofs and generalize our conclusions to wider classes of
perturbations and selected variables.

\section{Model}
\label{sec:model}
Our work is an extension of papers \cite{Ch1} and \cite{Ch2}, and for
consistency we follow their definitions and notations whenever
possible. We also refer the reader to these papers for more details on
the model.

Let $\mathcal{D} = \mathbb{T}^2 \setminus \cup_{i=1}^k \mathcal{B}_i$
be a 2D torus without a finite union of disjoint open convex domains
$\mathcal{B}_i$ whose boundary is $C^3$ smooth and has non-vanishing
curvature. Two particular tables of that sort are shown in
Figure~\ref{pic:tables}.

\begin{figure}[h!]
\centering
\begin{tabular}{ c c c}
\includegraphics[clip,width=0.4\textwidth]{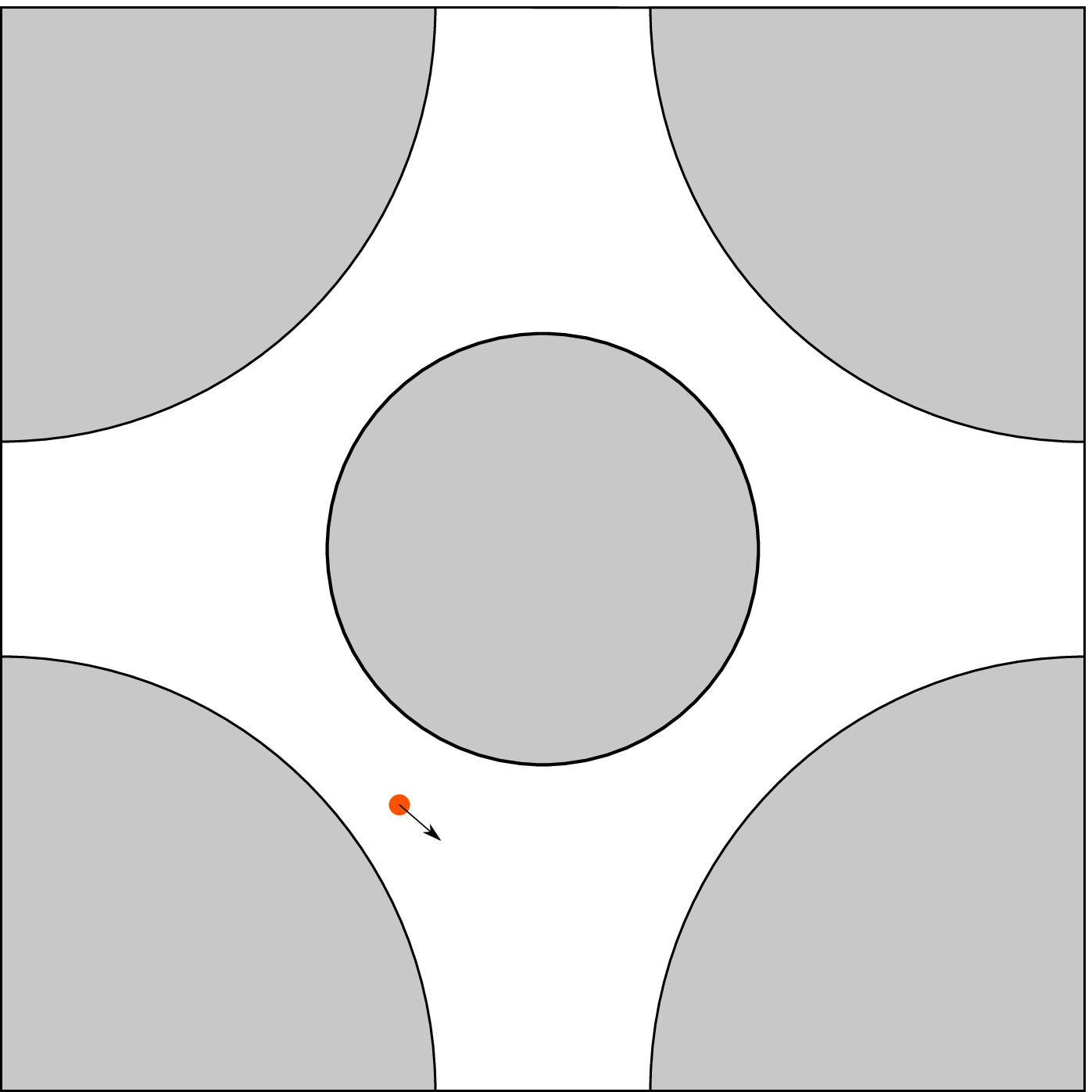} &
\includegraphics[clip,width=0.4\textwidth]{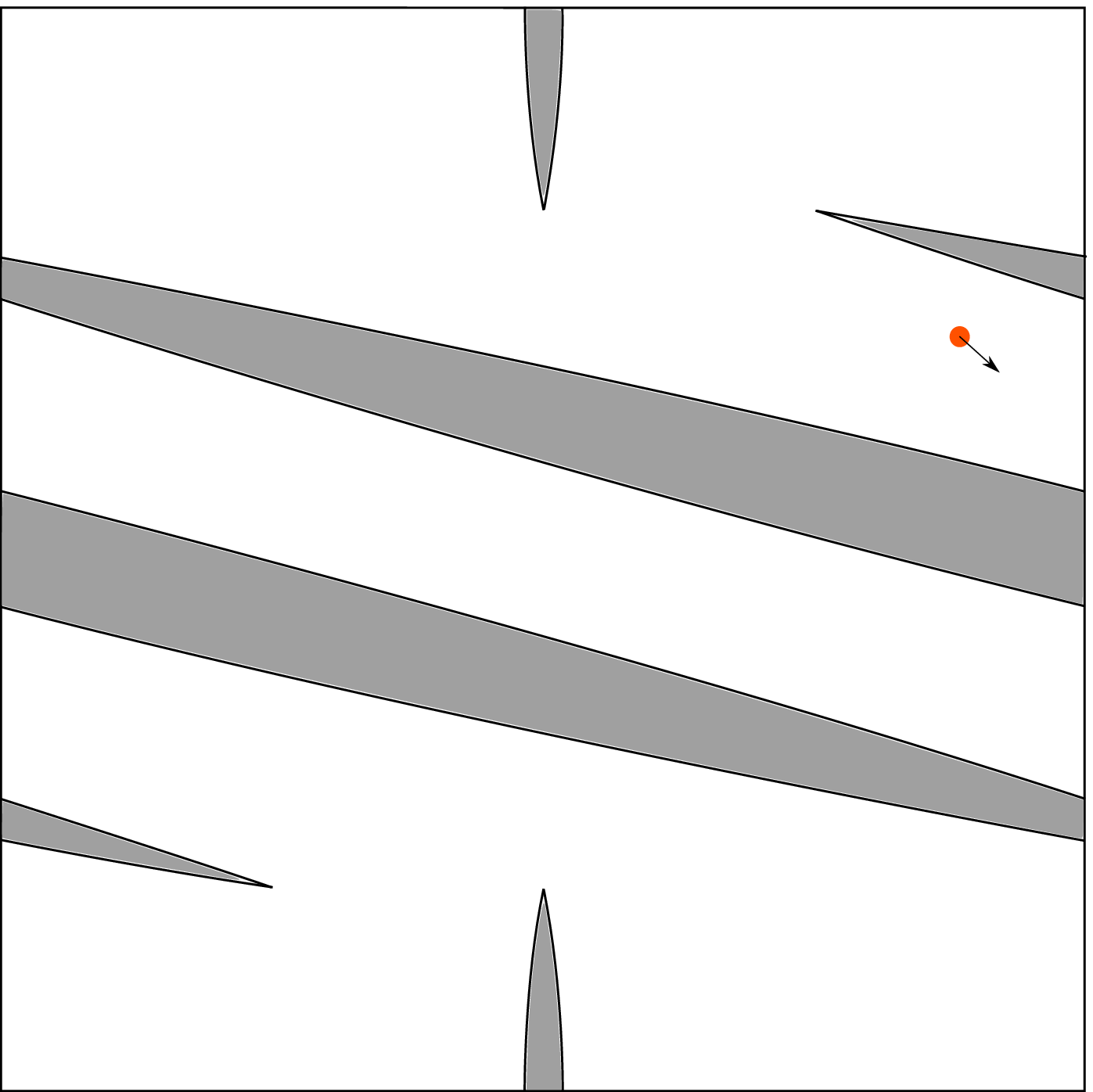} \\
A & B \\
\end{tabular}
\caption{Tables A and B used in our numerical experiments with Gaussian
Thermostat.\label{pic:tables}}
\end{figure}

A particle moves in $\mathcal{D}$ according to equations
\begin{equation}
\begin{cases}
\dot{\mathbf{q}}=\mathbf{p} \\
\dot{\mathbf{p}}=\mathbf{F}(\mathbf{p},\mathbf{q})
\end{cases}
\end{equation}
where $\mathbf{F}(\mathbf{p},\mathbf{q})=(F_1,F_2)$ is a stationary
(independent of time) force. Collisions with obstacles are elastic (the
speed is preserved) and specular (the angle of reflection is equal to
the angle of incidence). We are studying this model under three
assumptions on the geometry of the table and the force:

\medskip\noindent \textbf{Assumption A (additional integral).} A smooth
function $\mathcal{E}(\mathbf{q},\mathbf{p})$ is preserved by the
dynamics, $\Omega := \{\mathcal{E}(\mathbf{p},\mathbf{q}) =
\text{const} \}$ is a compact 3D manifold, and for every $\mathbf{q}$
and $\mathbf{p} \neq 0$ the ray $\{(\mathbf{q}, s\mathbf{p}),\,s>0\}$
intersects $\Omega$ in exactly one point.

Under this assumption, $\Omega$ can be parameterized by $(x,y,\theta)$
where $(x,y) \in \mathcal{D}$ is the position on the torus and $\theta$
is the angle of motion measured between $\mathbf{p}$ and the positive
$x$-axis. Equations of motion can now be given as
\[
\dot x = p \cos \theta, \quad \dot y = p \sin \theta,\quad
    \dot \theta = ph
\]
where
\[
p=\|\mathbf{p}\|>0 \quad \text{and} \quad h=(-F_1 \sin \theta
    + F_2 \cos \theta ) / p^2 .
\]
Note that $p$ does not have to be constant, but it is bounded away from
$0$ and infinity: $0<p_{\rm{min}} \leq p \leq p_{\rm{max}}<\infty$ (due
to the compactness of $\Omega$).

For a function $f$ on $\Omega$ let $f_x$, $f_y$, $f_\theta$
denote its partial derivatives, and $\|f\|_{C^2}$ the maximum
of $f$ and its first and second partial derivatives over $\Omega$.
Let $B_0 = \max \left( p_{\text{min}}^{-1}, \|p\|_{C^2},
\|h\|_{C^2} \right)$.

\medskip\noindent \textbf{Assumption B (smallness of the force).} The
force $\mathbf{F}$ and its first derivatives are small:
\[
\max(|h|,|h_x|,|h_y|,|h_\theta| ) \leq \delta_0
\]
More precisely, we require that for any given $B_*>0$ there
is a small $\delta_* = \delta_*(\mathcal{D},B_*)$ such that
all our results hold whenever $B_0<B_*$ and $\delta_0 < \delta_*$.

\medskip\noindent \textbf{Assumption C (finite horizon).} There is an
$L>0$ so that every straight line on the torus of length $L$ crosses
the interior of at least one obstacle. (Both tables A and B in
Figure~\ref{pic:tables} have finite horizon.)

A particular example of the force is \textit{Gaussian thermostat},
which was the subject of paper \cite{BCKL}, being a physically
interesting model of electrical conductance. There $\mathbf{F} \cdot
\mathbf{p}=0$, thus
$\mathcal{E}(\mathbf{p},\mathbf{q})=\frac{1}{2}\|\mathbf{p}\|^2$ is
preserved by the dynamics. For more examples see \cite[Section 2]{Ch1}.

\section{Standard notation and facts}

\emph{Flow} $\Phi^t$ acts on the \emph{phase space} $\Omega$, which is
a 3D manifold.

\emph{Collision space} $\mathcal{M} \subset \Omega$ is a set of
points where the particle undergoes a collision with $\partial
\mathcal{D}$. Now $\mathcal{M}$ can be parameterized by
$(r,\varphi)$ where $r$ is an arclength parameter along $\partial
\mathcal{D}$ and $\varphi$ is an angle between the particle's
outgoing velocity and the inward normal to $\partial \mathcal{D}$.
Note that $-\pi/2 \leq \varphi \leq \pi/2$ (see \cite{Ch1}), thus
$\cM$ can be identified with a finite union of cylinders
$\cup_{i=1}^k \partial \mathcal{B}_i \times [-\pi/2,\pi/2]$, hence
the collision space is independent of the force $\mathbf{F}$.

\emph{Collision map} $\mathcal{F}:\mathcal{M} \to \mathcal{M}$ is
the natural first return map on $\mathcal{M}$. It preserves a unique
SRB measure $\nu$; see \cite{Ch1}. We denote the time between
collisions by $\tau: \mathcal{M} \to \mathbb{R}$. Now $\Phi^t$ can
be represented as a suspension flow with base $\mathcal{M}$ and the
ceiling function $\tau$. The flow $\Phi^t$ preserves a unique SRB
measure $\mu$; see \cite{Ch2}. The map $\mathcal{F}$ and the flow
$\Phi^t$ are ergodic, mixing, and Bernoulli, they enjoy strong
statistical properties \cite{Ch2}.

We will use subscript ``0'' in $\nu_0$, $\mathcal{F}_0$, $\mu_0$,
$\Phi_0^t$ etc.\ to refer to the unperturbed (billiard) dynamics on
$\cD$, i.e., to the case $\mathbf{F}=0$.

There is a simple relation between $\mu$ and $\nu$: if $F:\Omega \to
\mathbb{R}$ is a bounded function such that $f(X) = \int_0^{\tau(X)}
F(\Phi^t(X))\,dt$, then
\begin{equation}\label{munu}
    \mu(F) = \frac{\nu(f)}{\nu(\tau)} \text{.}
\end{equation}

In addition to natural singularities of $\mathcal{F}$ (the preimages
of grazing collisions characterized by $\varphi=\pm \pi/2$) we need
to cut $\mathcal{M}$ into countably many \textit{homogeneity strips}
along the lines $\{\varphi = \pm (\pi/2-k^{-2})\}$ for all $k\geq
k_0$, forcing $\mathcal{F}$ to be discontinuous on the preimages of
these lines as well; see \cite{Ch1}.

Collision space $\mathcal{M}$ has a measurable partition into
\textit{homogeneous unstable manifolds} (or h-fibers) that are
increasing curves in the $(r, \varphi)$ coordinates with slopes
uniformly bounded away from $0$ and $\infty$ and uniformly bounded
curvature. H-fibers end on \textit{singularity curves} that are
images of the lines $\{ \varphi=\pm \pi/2 \}$ and boundaries of the
homogeneity strips. It is important for us that the singularity
curves are nondecreasing in $(r,\varphi)$ coordinates and there are
countably many of them. For almost every point $X \in \mathcal{M}$
(with respect to both the Lebesgue measure on $\mathcal{M}$ and the
SRB measure $\nu$) there exists an h-fiber $\gamma = \gamma(X)$ that
contains $X$. The SRB measure $\nu$ on $\mathcal{M}$ may be singular
with respect to the Lebesgue measure, but its conditional
distributions on h-fibers are absolutely continuous with respect to
the arclength measure.

For $X,Y \in \mathcal{M}$ we define the future separation time
$\mathbf{s}_+(X,Y)$ as the first $n\geq 0$ for which $\mathcal{F}^n(X)$
and $\mathcal{F}^n(Y)$ belong to different connected components of
$\mathcal{M}$. Similarly, $\mathbf{s}_-(X,Y)$ is the first $n\geq 0$
for which $\mathcal{F}^{-n}(X)$ and $\mathcal{F}^{-n}(Y)$ belong to
different connected components of $\mathcal{M}$.

A function $f:\mathcal{M} \to \mathbb{R}$ is \textit{dynamically
H\"older continuous} if there are $0<\theta_f<1$ and $C_f>0$ such that
for any $X$ and $Y$ lying on one unstable curve\footnote{A curve is
\emph{unstable} if its tangent vectors belong to unstable cones
\cite[Sect.~4.5]{CM06}, i.e.\ $C_1< d\varphi / dr < C_2$ for some positive
constants $C_1<C_2$. Note that unstable curves are defined on $\cM$ before it is cut into connected components, i.e., they can cross singularity lines and borders of the homogeneity strips.}
\[
    \left| f(X)-f(Y) \right| \leq C_f \theta_f^{\mathbf{s}_+(X,Y)}
\]
and for any $X$ and $Y$ lying on the same stable curve
\[
    \left| f(X)-f(Y) \right| \leq C_f \theta_f^{\mathbf{s}_-(X,Y)}
    \text{.}
\]
Dynamical H\"older continuity implies boundedness of $f$.
The class of dynamically H\"older continuous functions is large, for
example it includes all piecewise H\"older-continuous functions
whose discontinuities coincide with those of $\mathcal{F}^{\pm m}$
for some $m\geq0$.

We will say that a function $\rho$ is \textit{regular} on an unstable
curve $\gamma$ if
\begin{equation}\label{eq:regular}
\left|
\ln \rho(X) - \ln \rho(Y)
\right| \leq C_r \theta_r^{\mathbf{s}_+(X,Y)}
\end{equation}
where $\theta_r=\Lambda^{-1/6}<1$ and $C_r$ is a sufficiently large
constant that is determined by the geometry of the table and can be
chosen arbitrarily high.

A \textit{standard pair} is $(\gamma, \nu_\gamma)$ is an unstable curve
$\gamma$ with a probability measure $\nu_\gamma$ on it which has a
regular density with respect to the arclength measure. More generally,
a \textit{standard family} is an arbitrary collection $\mathcal{G} =
(\gamma_\alpha, \nu_\alpha)$, $\alpha \in \mathfrak{A}$, of standard
pairs with a probability factor measure $\lambda_\mathcal{G}$ on the
index set $\mathfrak{A}$ (one can naturally define a metric on the
space of all standard pairs, see \cite[Proposition~8.1]{CD09}, then
$\mathfrak{A}$ becomes a metric space with the respective Borel
$\sigma$-algebra). Every standard family $\mathcal{G}$ naturally
induces a measure on $\mathcal{M}$ by
\[
\nu_\mathcal{G}(A) = \int_{\mathfrak{A}}
\nu_\alpha(A\cap\gamma_{\alpha})\,
    d\lambda_\mathcal{G}(\alpha) \text{.}
\]
Every point $X$ on the unstable curve $\gamma \in \cG$ breaks
it into two pieces. Denote by $r_\cG(X)$ the length of the shorter one. Let
\begin{equation}\label{eq:zg}
    \cZ_\cG = \sup_{\eps>0}
    \frac{\nu_\cG(\{r_\cG(X)<\eps\})}{\eps} \text{.}
\end{equation}
A standard family $\mathcal{G}$ is \textbf{proper} if
$\mathcal{Z}_{\mathcal{G}} < C_p$ where $C_p$ is a large but fixed
constant. A standard family consisting of all h-fibers together with
the conditional measures induced by $\nu$ is proper \cite[p.~96]{Ch2}.

\section{Regularity of projections}
We will show that despite the singularity of the SRB measure with
respect to the Lebesgue measure, its projections that are
\textit{transverse} to the unstable manifolds have continuous
densities.

The collision space $\mathcal{M}$ admits a measurable partition
$\Gamma$ into h-fibers $\gamma \subset \mathcal{M}$. The SRB measure
$\nu$ induces conditional probability measures $\nu_\gamma$ on
h-fibers $\gamma \in \Gamma$ and a factor measure $\lambda$ on
$\Gamma$ with a standard $\sigma$-algebra (see, e.g.,
\cite[p.~287]{CM06}). The measures $\nu_\gamma$ are absolutely
continuous with respect to the arclength on $\gamma$. Moreover, the
corresponding density functions $\rho_\gamma$ are $C^1$ smooth (see,
e.g., \cite[Sect.~5.2]{CM06}) and regular as defined above.

The length of h-fibers, as a function
\[ \begin{array}{ccc}
L: & \cM \to \mathbb{R} \\
   & X \mapsto |\gamma| &\, \text{for}\, \gamma \ni X
\end{array}
\]
is measurable by the dominated convergence theorem: for almost every $X
\in \cM$ let $B_n(X)$ be a connected component of the domain of
$\cF^{-n}$ which contains $X$. Let $L_n(X) = \sup \{ |\gamma|:\, \gamma
\in \Gamma,\, \gamma \subset B_n(X) \}$ be a supremum of lengths of
h-fibers in $B_n(X)$. Then $L(X) = \lim_{n \to \infty} L_n(X)$ (see the
beginning of Chapter 5 in \cite{CM06} for more information on the
structure of $\Gamma$.) Hence the length of h-fibers is also a
measurable function on $\Gamma$, by a straightforward verification.

Since $\Gamma$ is a proper standard family, $\mathcal{Z}_\Gamma <
\infty$, and therefore \cite[Sect.~7.4]{CM06}
\begin{equation} \label{intGamma}
   \int_\Gamma \frac{d\lambda(\gamma)}{|\gamma|} <\infty.
\end{equation}
This allows us to renormalize the conditional measures and the
factor measure replacing $d \nu_\gamma$ and $d \lambda (\gamma)$ as
follows:
$$
  d \hat{\nu}_\gamma = |\gamma| d \nu_\gamma
  \qquad\text{and}\qquad
  d \hat{\lambda}(\gamma) = \frac{d \lambda(\gamma)}{|\gamma|}.
$$
The new factor measure $\hat\lambda$ is still finite, according to
\eqref{intGamma}.

Since $\nu_\gamma$ is a probability measure for each $\gamma$, and
$\rho_\gamma$ is $C^1$ smooth and regular, $\rho_\gamma$ is bounded
by $|\gamma|^{-1} e^{-C_r}$ from below and by $|\gamma|^{-1}
e^{C_r}$ from above, and thus the density $\hat \rho _ \gamma$ of
the new measure $\hat{\nu}_{\gamma}$ is also $C^1$ smooth and
bounded \emph{uniformly} by $e^{-C_r}$ and $e^{C_r}$.

\begin{lemma} \label{Lm4.1}
The $\nu$-measure of every h-fiber $\gamma \in \Gamma$ is zero, i.e.\
$\lambda(\gamma) = \hat{\lambda}(\gamma)=0$.
\end{lemma}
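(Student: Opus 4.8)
The plan is to show that a single h-fiber has zero conditional measure under $\nu$ and to exploit the bound $\mathcal{Z}_\Gamma<\infty$. Suppose for contradiction that $\lambda(\gamma_0)>0$ for some h-fiber $\gamma_0\in\Gamma$. The key structural observation is that $\gamma_0$ is a single increasing curve of finite length $|\gamma_0|$, and both endpoints of $\gamma_0$ lie on singularity curves. Since $\mathcal{F}$ is ergodic (indeed mixing) and measure-preserving, any set of positive $\nu$-measure has a dense orbit; in particular the single fiber $\gamma_0$ cannot be invariant, and its iterates $\mathcal{F}^n(\gamma_0)$ are pieces of unstable curves whose images again lie inside h-fibers (up to further cutting by singularities and homogeneity strips).

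The cleanest route is via the proper-family estimate: because $\Gamma$ is a proper standard family, $\mathcal{Z}_\Gamma<C_p<\infty$, so by \eqref{eq:zg}, for every $\eps>0$,
\[
\nu_\Gamma\bigl(\{r_\Gamma(X)<\eps\}\bigr)\le C_p\,\eps .
\]
Now fix the hypothetical atom $\gamma_0$. Pick any interior point $X_0\in\gamma_0$ and observe that the ``shorter piece length'' $r_\Gamma(X)$ is a genuine function on all of $\cM$. Consider the set of points $X$ lying on h-fibers that coincide with $\gamma_0$; for such $X$ we have $r_\Gamma(X)\le |\gamma_0|/2$ trivially, which gives nothing directly, so instead I would localize near $X_0$: for any $\eps<|\gamma_0|/2$ the set $\{X\in\gamma_0:\ r_\Gamma(X)<\eps\}$ consists of the two end-sub-arcs of $\gamma_0$ of length $\eps$, and $\nu_{\gamma_0}$ of this set is at most $e^{C_r}\eps/|\gamma_0|$ by the uniform density bounds $\hat\rho_\gamma\le e^{C_r}$. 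That alone does not contradict positivity of $\lambda(\gamma_0)$. The actual contradiction must come from counting: if $\lambda(\gamma_0)=c>0$, then pulling back, $c=\lambda(\gamma_0)=\hat\lambda(\gamma_0)|\gamma_0|$ would force $\hat\lambda$ to have an atom of mass $c/|\gamma_0|$, and since $\hat\lambda$ is a finite measure on $\Gamma$ this is not yet absurd — so the real argument needs the absolute continuity of $\nu_\gamma$ together with the fact that, because h-fibers are uncountably many and mutually disjoint and the transverse structure is nonatomic.

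Concretely, I would argue as follows. The conditional measure $\nu_{\gamma_0}$ is absolutely continuous with respect to arclength on $\gamma_0$, hence nonatomic, so it assigns zero mass to each point. But the claim is about $\lambda(\{\gamma_0\})$, i.e. the factor measure of the single partition element. By the disintegration formula,
\[
\nu(\gamma_0)=\int_{\{\gamma_0\}}\nu_\gamma(\gamma_0\cap\gamma)\,d\lambda(\gamma)=\nu_{\gamma_0}(\gamma_0)\,\lambda(\{\gamma_0\})=\lambda(\{\gamma_0\}),
\]
so $\lambda(\{\gamma_0\})=\nu(\gamma_0)$, and it suffices to show $\nu(\gamma_0)=0$. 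Now $\gamma_0$ is a $C^1$ curve of finite length, so it is a Lebesgue-null subset of the 2D collision space $\cM$; if $\nu$ were absolutely continuous this would finish it, but $\nu$ is singular, so I must instead use hyperbolicity. Here is the dynamical argument: by the contraction of stable directions, $\mathcal{F}^{-n}(\gamma_0)$ is contained in a union of unstable curves whose total ``stable width'' shrinks, while by invariance $\nu(\gamma_0)=\nu(\mathcal{F}^{-n}(\gamma_0))$ for all $n$. The set $\mathcal{F}^{-n}(\gamma_0)$ is still a union of unstable curves, but now each such curve, when intersected with the h-fiber through a generic point, lies within an $O(\Lambda^{-n})$-neighborhood (in the transverse/stable direction) of a point — because $\gamma_0$ being a single fiber means its preimage pieces are graphs over shrinking stable intervals. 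Applying the growth lemma / proper-family bound to the standard family $\Gamma$ and using that the $\nu$-measure of an $\eps$-neighborhood (in the stable direction) of any fixed unstable curve is $O(\eps)$ — which follows from $\mathcal{Z}_\Gamma<\infty$ via a transverse Fubini over h-fibers — yields $\nu(\gamma_0)=\nu(\mathcal{F}^{-n}(\gamma_0))\le \mathrm{const}\cdot\Lambda^{-n}\to 0$, hence $\nu(\gamma_0)=0$.

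The main obstacle I anticipate is making rigorous the claim that $\mathcal{F}^{-n}(\gamma_0)$ occupies only an $O(\Lambda^{-n})$-thin tube transverse to the unstable foliation — the preimages can be broken into many components by singularities, and one has to track the combinatorial growth of the number of components against their shrinking widths. This is exactly the kind of bookkeeping that the growth lemma for proper standard families (\cite[Sect.~7.4]{CM06}, \cite[p.~96]{Ch2}) is designed to control: the number of components grows subexponentially relative to $\Lambda^n$, or more precisely their total transverse measure still decays, so that the product stays summable/vanishing. I would therefore phrase the final step as an invocation of the growth lemma applied to the (proper) family obtained by pulling back $\gamma_0$, concluding $\nu(\gamma_0)=0$ and hence $\lambda(\gamma)=\hat\lambda(\gamma)=0$ for every h-fiber $\gamma\in\Gamma$.
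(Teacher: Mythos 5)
Your reduction $\lambda(\{\gamma_0\})=\nu(\gamma_0)$ and the use of invariance plus backward contraction of unstable curves are fine, but the step you lean on to finish --- that the $\nu$-measure of an $\eps$-neighborhood in the stable direction of a fixed unstable curve is $O(\eps)$, ``via a transverse Fubini from $\mathcal{Z}_\Gamma<\infty$'' --- is a genuine gap, and in fact circular. The quantity $\mathcal{Z}_\Gamma$ in \eqref{eq:zg} only controls the distribution of the distance $r_\Gamma(X)$ to the endpoint \emph{along} each fiber (the mass carried by short fibers); it says nothing about the transverse factor measure $\hat\lambda$. For a singular SRB measure no linear bound on tube measures should be expected (the transverse dimension can be below $1$), and if $\hat\lambda$ had an atom at $\gamma_0$ --- precisely the situation you are trying to exclude --- the measure of every $\eps$-tube around $\gamma_0$ would be bounded below by a positive constant, so your claimed estimate already presupposes the conclusion. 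The appeal to the growth lemma does not repair this: growth lemmas control forward images of unstable curves, whereas here $\mathcal{F}^{-n}(\gamma_0)$ is not cut at all --- it is a single piece of some h-fiber of length $O(\Lambda^{-n}|\gamma_0|)$ --- so the combinatorial bookkeeping you anticipate is moot, and the missing ingredient is elsewhere.

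The paper closes the argument differently: by Poincar\'e recurrence, for infinitely many $n$ the short curve $\mathcal{F}^{-n}(\gamma_0)$ overlaps $\gamma_0$, hence (fibers being disjoint) lies \emph{inside} $\gamma_0$; by invariance it carries the full measure $\nu(\gamma_0)=\lambda(\{\gamma_0\})>0$, while the regular (hence bounded by $e^{C_r}|\gamma_0|^{-1}$) conditional density gives its measure the upper bound $\lambda(\{\gamma_0\})\,e^{C_r}\,O(\Lambda^{-n})$, a contradiction for large $n$. Alternatively, your outline could be salvaged without any tube estimate: $\nu$ is nonatomic (absolutely continuous conditionals kill point masses), $\cM$ is compact, hence $\sup_x \nu(B_r(x))\to 0$ as $r\to 0$, and since $\mathcal{F}^{-n}(\gamma_0)$ has diameter $O(\Lambda^{-n})$, invariance gives $\nu(\gamma_0)\le \sup_x \nu\bigl(B_{C\Lambda^{-n}}(x)\bigr)\to 0$. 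Either finish works; the one you actually wrote does not.
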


\begin{proof}
Assume that an h-fiber $\gamma$ has positive measure. Recall that
$\rho_\gamma$ is bounded away from $0$ by $|\gamma|^{-1} e^{-C_r}$.
Note that the collision map $\mathcal{F}$ is piecewise continuous
and bijective. For every $n>0$ the curve $\mathcal{F}^{-n}(\gamma)$
is a piece of some h-fiber; it carries the same measure as $\gamma$,
but its length is $O(\Lambda^{-n} |\gamma|)$. By the Poincar\'e
recurrence theorem $\mathcal{F}^{-n} (\gamma)$ must overlap with
$\gamma$ infinitely many times, which implies that the measure of a
relatively long piece of $\gamma$ is equal to the measure of an
arbitrarily short piece, hence density $\rho_\gamma$ cannot be
bounded.
\end{proof}

In the following theorem $\cX$ denotes an abstract manifold, but
$\Gamma$ is still the partition of our collision space $\cM$ and  $\hat{\lambda}$ is still the factor measure defined above.

\begin{theorem}\label{topX}
Let $\cX$ be a compact Riemannian manifold equipped with Lebesgue
measure $dx$. Assume that for each $\gamma \in \Gamma$ there is a
function $p_{\gamma} \colon \cX \to \mathbb{R}$, which is bounded
uniformly in $\gamma$. Define a (possibly signed) measure $\xi$ on
$\cX$ by
\[
 \xi(A) = \int_\Gamma \zeta_\gamma(A) \,
 d\hat{\lambda}(\gamma),\qquad
    \zeta_\gamma(A) = \int_A p_\gamma(x) \, dx
\]
where we assume that $\zeta_\gamma(A)$ is measurable, as a function
of $\gamma$, for every measurable $A \subset \cX$. Assume that for
every point $x\in\cX$
\begin{equation} \label{E0}
   \hat{\lambda}\bigl\{\gamma\in\Gamma\colon
   p_{\gamma}\text{ is discontinuous at }x\bigr\}=0.
\end{equation}
Then the measure $\xi$ has a continuous density on $\cX$ with respect
to $dx$. If, in addition, for every $x\in\cX$
\begin{equation} \label{E1}
   \hat{\lambda}\bigl\{\gamma\in\Gamma\colon
   p_{\gamma}(x)>0 \bigr\} >0
\end{equation}
then the density of $\xi$ is strictly positive and bounded away from
zero.
\end{theorem}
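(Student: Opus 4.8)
The plan is to show that $\xi$ has the explicit continuous density
\[
g(x)=\int_\Gamma p_\gamma(x)\,d\hat\lambda(\gamma),
\]
the fiberwise average of the functions $p_\gamma$ (write $M$ for a uniform bound on the $|p_\gamma|$, and recall $\hat\lambda(\Gamma)<\infty$ and $\operatorname{vol}(\cX)<\infty$). The first task, and the only real obstacle, is measure-theoretic bookkeeping: the hypothesis provides measurability of $\gamma\mapsto\zeta_\gamma(A)$, whereas to define $g$ and to justify Fubini I need joint measurability of $(x,\gamma)\mapsto p_\gamma(x)$, which is not literally assumed. To obtain it, let $B(x,r)$ be the metric ball in $\cX$ and set $\tilde p(x,\gamma)=\lim_{k\to\infty}\zeta_\gamma(B(x,1/k))/\operatorname{vol}(B(x,1/k))$ where this limit exists and $\tilde p(x,\gamma)=0$ otherwise. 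For fixed $\gamma$ the map $x\mapsto\zeta_\gamma(B(x,1/k))$ is continuous (since $\operatorname{vol}(B(x,r)\,\triangle\,B(x',r))\to0$ as $x'\to x$), while for fixed $x$ it is $\hat\lambda$-measurable by hypothesis, so $\tilde p$ is jointly measurable by a standard Carath\'eodory-function argument. By the Lebesgue differentiation theorem $\tilde p(\cdot,\gamma)=p_\gamma$ at $dx$-a.e.\ point for every $\gamma$, hence $\zeta_\gamma(A)=\int_A\tilde p(x,\gamma)\,dx$; and \eqref{E0} gives $\tilde p(x,\gamma)=p_\gamma(x)$ for $\hat\lambda$-a.e.\ $\gamma$, for every $x$ (at a point of continuity the ball averages converge to the value of the function). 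In particular $\gamma\mapsto p_\gamma(x)$ is $\hat\lambda$-measurable, and $g(x)=\int_\Gamma p_\gamma(x)\,d\hat\lambda(\gamma)=\int_\Gamma\tilde p(x,\gamma)\,d\hat\lambda(\gamma)$ is a well-defined function with $|g|\le M\hat\lambda(\Gamma)$.

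The geometric content is then a one-line continuity argument. Fix $x$ and let $x_n\to x$ in $\cX$; by \eqref{E0} there is a $\hat\lambda$-null set of $\gamma$ outside which $p_\gamma$ is continuous at $x$, so $p_\gamma(x_n)\to p_\gamma(x)$ for all such $\gamma$, and since the integrands are dominated by $2M$ with $\hat\lambda(\Gamma)<\infty$, dominated convergence gives $g(x_n)\to g(x)$. Hence $g$ is continuous on $\cX$. That $g$ is a density of $\xi$ then follows by applying Fubini's theorem to $\tilde p$ (legitimate since $|\tilde p|\le M$, $\operatorname{vol}(\cX)<\infty$ and $\hat\lambda(\Gamma)<\infty$): for every Borel $A\subset\cX$,
\[
\xi(A)=\int_\Gamma\zeta_\gamma(A)\,d\hat\lambda(\gamma)
=\int_\Gamma\int_A\tilde p(x,\gamma)\,dx\,d\hat\lambda(\gamma)
=\int_A\int_\Gamma\tilde p(x,\gamma)\,d\hat\lambda(\gamma)\,dx
=\int_A g(x)\,dx ,
\]
which proves the first assertion.

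For the second assertion I note that it can only hold when $p_\gamma\ge0$ for $\hat\lambda$-a.e.\ $\gamma$ (otherwise $\xi$ is genuinely signed) --- the case in all our applications --- so I assume this. Then for each $x$ the integrand in $g(x)=\int_\Gamma p_\gamma(x)\,d\hat\lambda(\gamma)$ is nonnegative and, by \eqref{E1}, strictly positive on a set of $\gamma$ of positive $\hat\lambda$-measure, so $g(x)>0$ for every $x$. Being continuous on the compact manifold $\cX$, $g$ attains a strictly positive minimum, and therefore the density of $\xi$ is bounded away from zero.
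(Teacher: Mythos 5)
Your proof is correct, and its core is the same as the paper's: both take the fiberwise average $p(x)=\int_\Gamma p_\gamma(x)\,d\hat\lambda(\gamma)$ as the candidate density, use ball averages of $\zeta_\gamma$ together with \eqref{E0} to identify $p_\gamma(x)$ as a limit of measurable functions of $\gamma$, and get continuity (and positivity) from bounded/dominated convergence. Where you diverge is in the bookkeeping and in how the density identity is established: the paper computes $\lim_{r\to0}\xi(B_r(x))/|B_r(x)|=p(x)$ and then invokes the Lebesgue differentiation theorem for the measure $\xi$ itself, while you build a jointly measurable version $\tilde p(x,\gamma)$ of the densities (via the Carath\'eodory-function argument) and verify $\xi(A)=\int_A g\,dx$ for every Borel $A$ directly by Fubini; your route is a bit heavier on measurability but avoids differentiating $\xi$ and makes explicit a step the paper leaves implicit. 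You also correctly flag a point the paper glosses over: with genuinely signed $p_\gamma$, hypothesis \eqref{E1} alone does not force $p(x)>0$, so the positivity claim needs $p_\gamma\ge 0$ for $\hat\lambda$-a.e.\ $\gamma$ (as holds in all the paper's applications), and your explicit use of compactness plus continuity to pass from pointwise positivity to a bound away from zero fills in what the paper asserts without comment.
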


\begin{proof}
For $x \in \cX$ and $r>0$ let $B_r(x) \subset \cX$ denote the ball
of radius $r$ centered at $x$ and $|B_r(x)|$ its Lebesgue volume.
Then for $\hat{\lambda}$-almost every $\gamma \in \Gamma$
\[
   \lim_{r\to 0} \frac{\zeta_\gamma(B_r(x))}{|B_r(x)|} = p_\gamma(x).
\]
By the bounded convergence theorem $p_\gamma(x)$ is a measurable
function on $\Gamma$ and
\[
    \lim_{r\to 0}
    \frac{\xi(B_r(x))}{|B_r(x)|} = p(x) := \int_\Gamma
    p_\gamma(x) \, d\hat{\lambda}(\gamma).
\]
By the Lebesgue differentiation theorem $p(x)$ is almost everywhere
on $\cX$ equal to the density of $\xi$. The continuity and
positivity of $p(x)$ under our assumptions follows directly from the
bounded convergence theorem.
\end{proof}

Next we show how Theorem~\ref{topX} implies the continuity of
various projections of the SRB measure $\nu$. In all our cases,
every $x \in\cX$ will be a discontinuity point for at most countably
many functions $p_\gamma(x)$. This, along with Lemma~\ref{Lm4.1},
will guarantee the assumption \eqref{E0}.

\subsection{Angular distribution for the collision map}

Let $\cX=[-\pi/2, \pi/2]$ and $P\colon \cM \to \cX$ be the projection
onto the $\varphi$-coordinate. Let $\xi$ be the corresponding
pushforward of $\nu$.

\begin{theorem}  \label{TmAD}
The measure $\xi$ is absolutely continuous on $\cX=[-\pi/2, \pi/2]$
with a positive continuous density.
\end{theorem}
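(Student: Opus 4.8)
The plan is to deduce Theorem~\ref{TmAD} as a direct application of Theorem~\ref{topX} with the manifold $\cX = [-\pi/2,\pi/2]$ (a compact one-dimensional Riemannian manifold with boundary, which causes no trouble here). The key point is to identify, for each h-fiber $\gamma \in \Gamma$, the right function $p_\gamma$ on $\cX$ so that the induced measure $\xi$ agrees with the pushforward $P_*\nu$ under the renormalization $d\hat\nu_\gamma = |\gamma|\, d\nu_\gamma$, $d\hat\lambda(\gamma) = d\lambda(\gamma)/|\gamma|$. Concretely, since each h-fiber $\gamma$ is an increasing curve in the $(r,\varphi)$ coordinates with slope bounded away from $0$ and $\infty$, the projection $P|_\gamma \colon \gamma \to \cX$ is a bi-Lipschitz injection onto a subinterval $P(\gamma) \subset \cX$; hence the pushforward of the (renormalized) conditional measure $\hat\nu_\gamma$ under $P$ is absolutely continuous on $\cX$ with a density $p_\gamma$ that I would define to be this pushforward density (extended by $0$ outside $P(\gamma)$). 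Explicitly $p_\gamma(\varphi) = \hat\rho_\gamma(X)\,\bigl|\tfrac{dr}{d\varphi}\bigr|$ evaluated at the unique point $X = (r,\varphi)\in\gamma$ with that $\varphi$-coordinate, and $0$ if no such point exists.

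First I would verify the uniform boundedness hypothesis of Theorem~\ref{topX}: the renormalized density $\hat\rho_\gamma$ is bounded above by $e^{C_r}$ uniformly in $\gamma$ (as established in the paragraph preceding Lemma~\ref{Lm4.1}), and the Jacobian factor $|dr/d\varphi|$ is bounded because h-fiber slopes $d\varphi/dr$ are uniformly bounded below by a positive constant; so $\sup_\gamma \|p_\gamma\|_\infty < \infty$. Next, measurability of $\zeta_\gamma(A) = \int_A p_\gamma\,dx$ in $\gamma$ for each Borel $A \subset \cX$ follows from the measurability of the h-fiber structure already discussed (the map $X \mapsto \gamma(X)$ and the length function $L$ are measurable), together with the smooth dependence of $\hat\rho_\gamma$ and of the coordinate change along each fiber. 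Then I would check that $\xi$ as defined in Theorem~\ref{topX} really is $P_*\nu$: unwinding the definitions, $\int_\Gamma \zeta_\gamma(A)\,d\hat\lambda(\gamma) = \int_\Gamma \hat\nu_\gamma(P^{-1}A \cap \gamma)\,d\hat\lambda(\gamma) = \int_\Gamma \nu_\gamma(P^{-1}A\cap\gamma)\,d\lambda(\gamma) = \nu(P^{-1}A) = (P_*\nu)(A)$, where the cancellation of the $|\gamma|$ factors is exactly the reciprocal renormalization.

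For hypothesis \eqref{E0}, I would argue that for a fixed $\varphi_0 \in \cX$, the function $p_\gamma$ is discontinuous at $\varphi_0$ only if $\varphi_0$ is an endpoint of the interval $P(\gamma)$, i.e.\ only if one of the two endpoints of the h-fiber $\gamma$ has $\varphi$-coordinate equal to $\varphi_0$ (the interior of $P(\gamma)$ gives a continuous, indeed $C^1$, contribution since $\hat\rho_\gamma$ is $C^1$ and the coordinate change is smooth). The endpoints of h-fibers lie on the countably many singularity curves, each of which is nondecreasing in $(r,\varphi)$; a nondecreasing curve attains any fixed value $\varphi = \varphi_0$ either at a single point or along a horizontal segment, and in either case the set of h-fibers whose endpoint sits at that $\varphi$-level is contained in a single h-fiber's worth of data or a one-parameter family that projects to a $\hat\lambda$-null set. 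More carefully: for each of the countably many singularity curves $S_j$, the set $\{\gamma : \gamma \text{ has an endpoint on } S_j \text{ at level } \varphi_0\}$ corresponds, via the increasing h-fiber foliation crossing $S_j$, to at most one h-fiber (or a segment), so by Lemma~\ref{Lm4.1} it has $\hat\lambda$-measure zero; summing over the countably many $S_j$ keeps it null. This gives \eqref{E0}, and Theorem~\ref{topX} yields a continuous density. For strict positivity via \eqref{E1}: for any $\varphi_0 \in \cX$, a positive-$\hat\lambda$-measure set of h-fibers $\gamma$ has $\varphi_0$ in the interior of $P(\gamma)$ (h-fibers are macroscopically long in the $\varphi$-direction on a set of positive measure, a consequence of the standard-family/growth-lemma machinery, or can be seen directly since $\nu$ is a probability measure and the h-fibers foliate $\cM$), and there $p_\gamma(\varphi_0) = \hat\rho_\gamma \cdot |dr/d\varphi| \geq e^{-C_r} \cdot c > 0$; hence \eqref{E1} holds and the density is bounded away from zero.

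The main obstacle I expect is the verification of \eqref{E0} — controlling, for each fixed $\varphi_0$, the set of h-fibers whose projected interval $P(\gamma)$ has $\varphi_0$ as an endpoint — which forces one to use the specific structure of the singularity set (countably many monotone curves) together with Lemma~\ref{Lm4.1}; the monotonicity is what prevents a positive-measure family of h-fibers from all terminating at the same $\varphi$-level. Everything else (uniform bounds, measurability, the renormalization identity $\xi = P_*\nu$) is routine bookkeeping with the standard billiard facts recalled in the excerpt.
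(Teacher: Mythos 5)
Your overall route is the same as the paper's: represent $\xi$ through the renormalized disintegration, take $p_\gamma$ to be the pushforward density of $\hat{\nu}_\gamma$ under the $\varphi$-projection (uniformly bounded because $\hat{\rho}_\gamma\leq e^{C_r}$ and the slopes of h-fibers are bounded away from $0$ and $\infty$), and feed this into Theorem~\ref{topX}, with \eqref{E0} coming from ``at most countably many fibers have an endpoint at a given $\varphi$-level'' plus Lemma~\ref{Lm4.1}. The genuine gap is in your verification of \eqref{E1}. Neither of your two justifications -- ``h-fibers are macroscopically long in the $\varphi$-direction on a set of positive measure'' or ``$\nu$ is a probability measure and the h-fibers foliate $\cM$'' -- implies that, for a \emph{prescribed} level $\varphi_0$, a positive $\hat{\lambda}$-measure set of fibers crosses that level. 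The issue is most visible for $\varphi_0$ inside a thin homogeneity strip near $\pm\pi/2$: only fibers confined to that strip (hence very short) can cross such a level, so one needs quantitative information about how the SRB measure and its h-fibers are distributed near an arbitrary horizontal line, not just that fibers are typically long. This is exactly where the paper injects nontrivial dynamical input: the preimage $\cF^{-1}(\{\varphi=\varphi_0\})$ is a finite union of stable curves, and \cite[Lemma 3.3]{Ch2} then guarantees that a positive measure of h-fibers cross it (hence, pushing forward by $\cF$, cross the line itself). Without this step, positivity of the density is unproved; everything else in your write-up really is bookkeeping, but this point is not.

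A secondary weak spot is in your treatment of \eqref{E0}: when a singularity curve meets the line $\varphi=\varphi_0$ along a horizontal segment, you dismiss the corresponding family of terminating fibers as ``a one-parameter family that projects to a $\hat{\lambda}$-null set\ldots by Lemma~\ref{Lm4.1}.'' That is a non sequitur: Lemma~\ref{Lm4.1} only says that each \emph{individual} fiber is null, and an uncountable one-parameter family of fibers can perfectly well carry positive $\hat{\lambda}$-measure (the whole partition $\Gamma$ is such a family). The paper's argument at this point rests instead on the claim that only countably many h-fibers terminate on any fixed level line, so that Lemma~\ref{Lm4.1} applies to a countable union; if you want to go beyond that terse statement you need an argument specific to the structure of the singularity curves, not the lemma you quoted.
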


\begin{proof} For $A \subset \cX$ we have $\xi(A) = \nu(P^{-1}(A))$ and
\[\xi(A) = \int_\Gamma \zeta_\gamma(A) \,
d\hat{\lambda}(\gamma), \qquad \zeta_\gamma(A) =
\hat{\nu}_\gamma\left(\gamma \cap P^{-1}(A)\right)\] Since the
h-fibers are increasing curves in the $(r, \varphi)$ coordinates
with slopes uniformly bounded away from $0$ and $\infty$ and the
densities $\hat{\rho}_{\gamma}$ of the respective measures
$\hat{\nu}_\gamma$ are uniformly bounded, all the measures
$\zeta_\gamma$ have uniformly bounded piecewise continuous densities
on $\cX$ (the densities of $\zeta_\gamma$ correspond to $p_\gamma$
in Theorem~\ref{topX}).

Recall that the h-fibers terminate on singularity curves, and every
line $\varphi=\text{const}$ intersects at most countably many
singularities. Thus at most countably many unstable manifolds
terminate on that line, hence at most countably many projected
densities have discontinuities at any given $\varphi \in \cX$. This
guarantees \eqref{E0}, and \eqref{E1} follows from \cite[Lemma
3.3]{Ch2} because the preimage of each line $\varphi=\text{const}$
is a finite union of stable curves. For every measurable $A \subset
\cX$, the measurability of the function $\gamma \mapsto
\zeta_\gamma(A)$ follows from the measurability of the partition
$\Gamma$. Now the result follows from Theorem~\ref{topX}.
\end{proof}

Exactly the same argument shows that projection onto the $r$
coordinate has a positive continuous density.

\subsection{Projection on $\mathcal{D}$}
We leave the collision space $\mathcal{M}$ and project the SRB measure
$\mu$ for the flow $\Phi^t$ onto the table $\mathcal{D}$. Let $P\colon
\Omega \to \cD$ be the projection of the phase space onto the
configuration space, and $\xi$ be the pushforward of $\mu$.

\begin{theorem}
The measure $\xi$ is absolutely continuous on $\mathcal{D}$ with a
positive continuous density.
\end{theorem}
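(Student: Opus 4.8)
The plan is to reduce this to Theorem~\ref{topX} via the suspension-flow relation \eqref{munu}. For a Borel set $A\subset\cD$ take $F=\mathbf{1}_{P^{-1}(A)}$; then $f(X)=\int_0^{\tau(X)}\mathbf{1}_A\bigl(P(\Phi^tX)\bigr)\,dt$ is the time the free flight issued from $X$ spends over $A$, and \eqref{munu} gives $\xi(A)=\mu(P^{-1}(A))=\nu(f)/\nu(\tau)$. Disintegrating $\nu$ over $\Gamma$ and passing to the renormalized conditional and factor measures $\hat\nu_\gamma,\hat\lambda$ (which still disintegrate $\nu$, since the factors $|\gamma|$ cancel) yields
\[
\xi(A)=\int_\Gamma\zeta_\gamma(A)\,d\hat\lambda(\gamma),\qquad
\zeta_\gamma(A)=\frac{1}{\nu(\tau)}\int_\gamma\!\int_0^{\tau(X)}\mathbf{1}_A\!\bigl(P(\Phi^tX)\bigr)\,dt\,d\hat\nu_\gamma(X).
\]
By Tonelli, $\zeta_\gamma$ is $\nu(\tau)^{-1}$ times the push-forward of the measure $\hat\rho_\gamma(X)\,ds\,dt$ on the ``flow box'' $R_\gamma=\{(X,t):X\in\gamma,\ 0\le t\le\tau(X)\}$ under the map $\Psi_\gamma(X,t)=P(\Phi^tX)$. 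So it suffices to show that this push-forward has a density $p_\gamma$ on $\cD$ (playing the role of $p_\gamma$ in Theorem~\ref{topX}) which is bounded uniformly in $\gamma$ and whose discontinuities behave as required, and then to verify the remaining hypotheses.

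The key input is a lower bound on the Jacobian of $\Psi_\gamma$. Its image of the flow direction $\partial_t$ is the velocity vector, of length in $[p_{\min},p_{\max}]$; its image of a unit tangent to $\gamma$ is the configuration-space part of a propagated unstable Jacobi field, and because h-fibers are dispersing unstable curves this vector stays transverse to the velocity along the whole free flight. Writing the two out one finds that $|\det D\Psi_\gamma|$ is bounded below by a positive multiple of $\cos\varphi$ and also by a positive multiple of $t$, hence by a positive multiple of $\max(\cos\varphi,t)$; since a point of $\cD$ at distance $d$ from $\partial\cD$ reached at time $t$ from a collision with incidence angle $\varphi$ has $d\asymp t\cos\varphi$, this is $\gtrsim\sqrt d$. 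Combined with finite horizon plus the smallness of the force (so that free flights are nearly straight and $\Psi_\gamma$ is at most boundedly many-to-one) and the uniform bounds $e^{-C_r}\le\hat\rho_\gamma\le e^{C_r}$, this bounds $p_\gamma$ uniformly in $\gamma$ on every compact subset of $\mathrm{int}\,\cD$. The genuinely delicate case is the boundary layer near $\partial\cD$, where a near-grazing h-fiber ($\varphi\to\pm\pi/2$) produces a flow box that is thin, by a factor $\cos\varphi$, in the direction transverse to the orbit; controlling the resulting contribution to the density (and showing that the density does not in fact blow up at $\partial\cD$, as it does not in the billiard case) is exactly what the homogeneity strips are designed for, and I expect standard distortion and curvature estimates for h-fibers to close it, possibly after a mild elaboration of the proof of Theorem~\ref{topX} in which an $\hat\lambda$-integrable dominating function replaces the uniform bound. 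This is the step I expect to be the main obstacle.

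It remains to check \eqref{E0}, \eqref{E1} and the measurability, in the spirit of Theorem~\ref{TmAD}. For fixed $x\notin\partial\cD$, the density $p_\gamma$ is continuous at $x$ unless the free flight from an endpoint of $\gamma$, or from a point where $\tau$ jumps along $\gamma$, passes through $x$; and the set $Z_x\subset\cM$ of points whose forward free flight passes through $x$ is a finite union of stable curves, because the trajectories through a fixed configuration point form a focusing wavefront (the analogue of ``the preimage of a line $\varphi=\text{const}$ is a finite union of stable curves'' in the proof of Theorem~\ref{TmAD}). A stable curve meets each of the countably many nondecreasing singularity curves in at most one point, and the endpoints and $\tau$-discontinuity points of h-fibers lie on those singularity curves, so only countably many h-fibers are relevant to $x$; by Lemma~\ref{Lm4.1} their total $\hat\lambda$-measure is $0$, which gives \eqref{E0} (the $dx$-null set $\partial\cD$ is handled by the boundary-layer discussion above). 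For \eqref{E1}, $p_\gamma(x)>0$ precisely when $\gamma$ crosses $Z_x$ in its interior, and a positive-$\hat\lambda$-measure set of h-fibers crosses any finite union of stable curves by \cite[Lemma~3.3]{Ch2}. Measurability of $\gamma\mapsto\zeta_\gamma(A)$ follows from measurability of the partition $\Gamma$ together with Tonelli in the flow variable. Theorem~\ref{topX} (or the elaboration mentioned above near $\partial\cD$) then yields the positive continuous density.
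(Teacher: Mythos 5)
Your proposal follows the paper's route exactly: reduce to Theorem~\ref{topX} via \eqref{munu} and the $\hat\nu_\gamma,\hat\lambda$ disintegration, take $p_\gamma$ to be the density of the projected trace measure $A\mapsto\hat\nu_\gamma(f_A)$, get \eqref{E0} from the fact that the set of collision points whose free flight passes through $x$ (your $Z_x$, the paper's $E_x$) is a focusing wave front meeting the countably many singularity curves in countably many points, so only countably many h-fibers are bad and Lemma~\ref{Lm4.1} applies, and get \eqref{E1} from \cite[Lemma 3.3]{Ch2}. The one step you single out as the main obstacle --- uniform boundedness of $p_\gamma$ near grazing exits --- is exactly the step the paper settles with a single sentence (boundedness of $p_\gamma$ is asserted to follow from the uniform bound on $\hat\rho_\gamma$, with no Jacobian analysis), so your estimate $|\det D\Psi_\gamma|\gtrsim \cos\varphi+t\gtrsim\sqrt{d(x)}$ gives more, not less, than the published argument: since the bounded-convergence argument in Theorem~\ref{topX} only needs a bound on $p_\gamma$ that is uniform in $\gamma$ on a neighborhood of the fixed point $x$, your bound already closes the case of $x$ in the interior of $\cD$, and the dominated-convergence variant you sketch is the natural way to handle continuity up to $\partial\cD$, a point the paper does not address separately.
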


\begin{proof}
According to formula (\ref{munu}), for any set $A \subset \cD$
\[
    \xi(A) = \mu\bigl(P^{-1}(A)\bigr)
    = \frac{\nu(f_A)}{\nu(\tau)}\text{\,\, with}
\]
\[
    f_A(X) = \int_0^{\tau(X)} \mathbf{1}_A(P(\Phi^t(X)))\,dt.
\]
Therefore
\[
    \xi(A) = \frac{1}{\nu(\tau)}
    \int_\Gamma \hat{\nu}_\gamma (f_A)\, d\hat{\lambda}(\gamma).
\]
We observe that the map $A \mapsto \hat{\nu}_\gamma(f_A)$ defines a
measure on $\cD$ with a piecewise continuous density. This measure is
supported on the \textit{trace} of h-fiber $\gamma$ (see
Figure~\ref{unstable-table}), on which the density is positive and
continuous. This density corresponds to $p_\gamma$ in
Theorem~\ref{topX}. The densities $p_\gamma$ on $\mathcal{D}$ are
bounded uniformly in $\gamma$ because the density of $\hat{\nu}_\gamma$
on each $\gamma$ is uniformly bounded.

\begin{figure}[h!]
\centering
\includegraphics{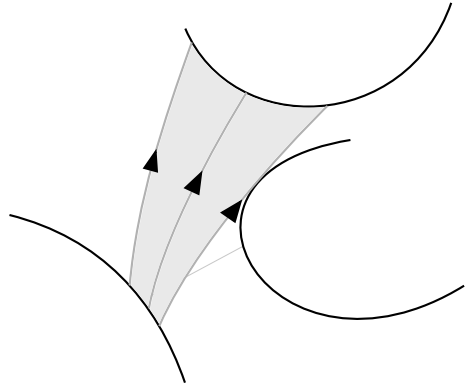}
\caption{$\hat{\nu}_\gamma(f_A)$ is supported on trajectories that start
from an h-fiber $\gamma$}
\label{unstable-table}
\end{figure}

Furthermore, for any $x \in \cD$ define the set
\[E_x:=\left\{ X \in \cM:\,\,
\{\Phi^t(X)\}_{t=0}^{\tau(X)} \cap P^{-1}(x) \text{ is not empty}
\right\}\] of points in the collision space. The trajectories
starting from $E_x$ pass through $x$ and therefore correspond to a
focusing wave front \cite[Sect.~3.7]{CM06}. In other words, $E_x$
consists of a finite number of decreasing curves in the
$(r,\varphi)$-coordinates, which have countably many intersections
with singularity curves on which h-fibers terminate. Recall that
h-fibers also terminate on the preimages of lines $\{\varphi=\pm
\pi/2\}$. It is clear that if $x \in \mathcal{D}$ is a point of
discontinuity for the density $p_\gamma$ of some h-fiber $\gamma$,
then $\gamma$ must terminate on $E_x$. Therefore $x$ can be a point
of discontinuity for at most countably many densities $p_\gamma$.
This guarantees \eqref{E0}, and \eqref{E1} again follows from
\cite[Lemma 3.3]{Ch2} because $E_x$ is a finite union of stable
curves. Now the result follows from Theorem~\ref{topX}.
\end{proof}

Note that the density of the measure $\xi$ on $\mathcal{D}$ is given by
\[
 p(x) := \frac{1}{\nu(\tau)} \int_\Gamma p_\gamma(x)\, d\hat{\lambda}(\gamma).
\]

Next we show that the velocity field is continuous in the following
sense. For every point $\mathbf{q}\in \cD$ let
$\bar{\mathbf{p}}(\mathbf{q}) = \mu_{\mathbf{q}}(\mathbf{p})$ denote
the average velocity vector, where $\mu_{\mathbf{q}}$ denotes the
conditional measure induced by $\mu$ on the section of the phase
space $\Omega$ corresponding to the fixed footpoint $\mathbf{q}$.

\begin{theorem}
The velocity vector field $\bar{\mathbf{p}}(\mathbf{q})$ is continuous
on $\cD$.
\end{theorem}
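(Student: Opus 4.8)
The plan is to express $\bar{\mathbf p}(\mathbf q)$ as the quotient of two continuous functions. Let $\xi$ and $p(\mathbf q)$ be the projected measure and its density on $\cD$ constructed in the previous theorem; recall that $p$ is continuous and strictly positive on $\cD$. Consider the vector‑valued measure $\mathbf m$ on $\cD$ defined componentwise by $\mathbf m(A)=\int_{P^{-1}(A)}\mathbf p\,d\mu$. Disintegrating $\mu$ over $P$ and using that $p(\mathbf q)\,d\mathbf q$ is the factor measure, one has $\int_{P^{-1}(A)}\mathbf p\,d\mu=\int_A\mu_{\mathbf q}(\mathbf p)\,p(\mathbf q)\,d\mathbf q=\int_A\bar{\mathbf p}(\mathbf q)\,p(\mathbf q)\,d\mathbf q$, so $\bar{\mathbf p}(\mathbf q)\,p(\mathbf q)$ is the density of $\mathbf m$. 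Hence it suffices to show that $\mathbf m$ has a continuous density on $\cD$: then $\bar{\mathbf p}(\mathbf q)$ is its componentwise quotient by the continuous, strictly positive function $p(\mathbf q)$, and is therefore continuous.

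To obtain a continuous density for $\mathbf m$ I would rerun the proof of the preceding theorem with the indicator weight replaced by $\mathbf p$. Applying \eqref{munu} componentwise to $F(X)=\mathbf p(X)\,\mathbf 1_A(P(X))$ gives
\[
\mathbf m(A)=\frac{1}{\nu(\tau)}\int_\Gamma\hat\nu_\gamma(\mathbf g_A)\,d\hat\lambda(\gamma),
\qquad
\mathbf g_A(X)=\int_0^{\tau(X)}\mathbf p(\Phi^t(X))\,\mathbf 1_A(P(\Phi^t(X)))\,dt.
\]
Exactly as before, $A\mapsto\hat\nu_\gamma(\mathbf g_A)$ is a (now vector‑valued, signed) measure on $\cD$ supported on the trace of the h‑fiber $\gamma$, with a piecewise continuous density; these densities play the role of $p_\gamma$ in Theorem~\ref{topX} and are bounded uniformly in $\gamma$, since $\mathbf p$ is bounded on $\Omega$ and the $\hat\rho_\gamma$ are uniformly bounded. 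Theorem~\ref{topX} is stated for signed measures, so it applies once assumption \eqref{E0} is verified for these densities.

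The verification of \eqref{E0} is the only real point, and it reduces to the case already handled. Fix $x\in\cD$ and an h‑fiber $\gamma$. The density of $\hat\nu_\gamma(\mathbf g_A)$ at $x$ is a finite sum, over the points $X\in\gamma\cap E_x$, of a geometric weight (the Jacobian of the focusing‑wave‑front change of variables times $\hat\rho_\gamma(X)$) multiplied by the velocity $\mathbf p(\Phi^{t(X)}(X))$ at the instant the trajectory issued from $X$ passes through $x$. Along a free flight $p$ and $\theta$ evolve smoothly, so both the geometric weight and this velocity factor depend continuously on $x$ as long as the number of terms in the sum is constant; that number changes only when $\gamma$ meets one of the countably many curves forming $E_x$, or the preimages of $\{\varphi=\pm\pi/2\}$ where h‑fibers terminate. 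Just as in the previous theorem, a discontinuity at the fixed point $x$ forces $\gamma$ to terminate on $E_x$, so at most countably many of these densities are discontinuous at $x$; with Lemma~\ref{Lm4.1} this yields \eqref{E0}. Theorem~\ref{topX} then gives a continuous density for $\mathbf m$, and dividing by $p(\mathbf q)$ finishes the proof. The step that needs the most care is confirming that inserting the weight $\mathbf p$ introduces no new discontinuities beyond those already present in $p_\gamma$ — i.e. that the velocity along a free flight passing through $x$ is a continuous function of $x$ with the same combinatorial breakpoints encoded in $E_x$ — which is exactly the smoothness of the free motion between collisions.
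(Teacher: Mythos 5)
Your proposal is correct and follows essentially the same route as the paper: weight the flow's SRB measure by the velocity components (your vector-valued $\mathbf m$ is exactly the paper's signed measures $\mu_1$, componentwise), push forward to $\cD$, verify \eqref{E0} by the same $E_x$/terminating-fiber argument so that Theorem~\ref{topX} yields a continuous density, and then divide by the continuous, strictly positive density $p$ from the preceding theorem. The only cosmetic difference is that you make the identification of $\bar{\mathbf p}\,p$ as the density of $\mathbf m$ explicit via disintegration, which the paper states more informally as the ratio $\xi_1(A)/\xi(A)$.
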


\begin{proof}
In place of the SRB measure for the flow $\mu$ we use a signed
measure $\mu_1$, defined by $\mu_1(F) = \mu(v_1 F)$ for any function
$F$ on $\Omega$, where $v_1(X)$ is the horizontal component of the
velocity of the particle at $X \in \Omega$. Then the projection of
$\mu_1$ onto $\mathcal{D}$ is a signed measure $\xi_1$ defined by
\[
 \xi_1(A)=\mu_1 (P^{-1}(A)) = \mu \left(v_1 \mathbf{1}_{P^{-1}(A)} \right) =
 \frac{\nu(f_{1,A})}{\nu(\tau)} \text{, where}
\]
\[
 f_{1,A}(X) = \int_0^{\tau(X)} \left(v_1 \mathbf{1}_{P^{-1}(A)} \right) \left(
 \Phi^t(X)\right) dt,
\]
and
\[
 \xi_1(A) = \frac{1}{\nu(\tau)} \int_\Gamma \hat{\nu}(f_{1,A})\,d\hat{\lambda}(\gamma).
\]
Once again, the map $A \mapsto \hat{\nu}(f_{1,A})$ defines a (signed)
measure on $\mathcal{D}$ with density $p_{1,\gamma}$ that has the same
properties as $p_\gamma$ above, so that Theorem~\ref{topX} applies to
prove that $\xi_1$ has continuous a density on $\mathcal{D}$ given by
\[
 p_1(\mathbf{q}) := \frac{1}{\nu(\tau)}
 \int_\Gamma p_{1,\gamma}(\mathbf{q})\, d\hat{\lambda}(\gamma)
\]

Note that the average horizontal velocity on the set $A \subset
\mathcal{D}$ is given by $\mu\left( v_1 \mathbf{1}_{P^{-1}(A)} \right)
/ \mu \left( \mathbf{1}_{P^{-1}(A)} \right) =\xi_1(A)/\xi(A)$. Taking
into account that $\xi$ and $\xi_1$ have continuous densities $p$ and
$p_1$, and $p>0$ everywhere, the average horizontal velocity at every
point $X \in \mathcal{D}$ is well defined as
$p_1(\mathbf{q})/p(\mathbf{q})$ and is continuous. The same argument
works for the vertical component of the velocity.
\end{proof}

Figure~\ref{pic:velocity} shows the computed velocity field in a system
with a Gaussian thermostated force $\mathbf{F}$ directed horizontally
to the right.

\begin{figure}[h!]
\centering
\includegraphics[clip,width=0.48\textwidth]{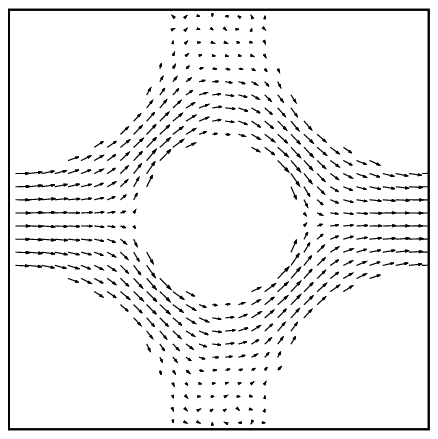}
\includegraphics[clip,width=0.48\textwidth]{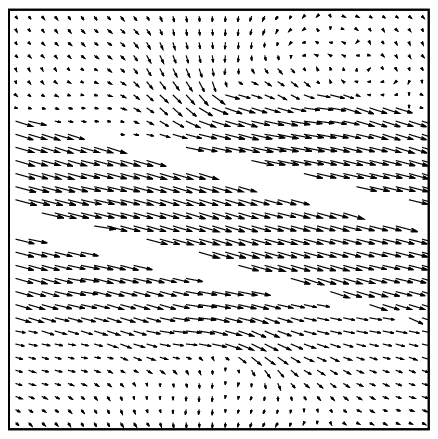}
\caption{Velocity fields on tables A and B (see
Figure~\ref{pic:tables}).\label{pic:velocity}}
\end{figure}

\subsection{Angular distribution for the flow with Gaussian thermostat}
\label{sec:gt}

This section is restricted to a specific model --- a constant external
field with Gaussian thermostat. The force is given by $\bF = \bE -
\frac{\bE \cdot \bp}{|\bp|^2}$, where $\bE$ is a small nonzero constant
vector. This model was the subject of our paper \cite{BCKL}.

We can choose the coordinate system so that $\bE$ points in the
positive $x$ direction, then $\theta\in [-\pi,\pi]$ (see Assumption A)
measures the angle between the particle velocity and the field $\bE$.
It is a direct verification that the only straight trajectories (where
$\dot \theta \neq 0$) are those parallel to the field $\bE$, i.e.,
$\theta\in \{0,\pm \pi\}$. This fact is used in the construction below.

Let $\cX = [-\pi,\pi]$, denote by $P \colon \Omega \to \cX$ the
projection of the phase space onto the $\theta$ coordinate, and by
$\xi$ the pushforward of $\mu$.

\begin{theorem}
The measure $\xi$ is absolutely continuous on $\cX$ with a
positive continuous density.
\end{theorem}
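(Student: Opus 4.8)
The plan is to run exactly the same machine as for the projection onto $\cD$: write $\xi$ as an integral over $\Gamma$ of measures $\zeta_\gamma$ on $\cX=[-\pi,\pi]$ with piecewise continuous densities $p_\gamma$ that are bounded uniformly in $\gamma$, verify \eqref{E0} and \eqref{E1}, and then quote Theorem~\ref{topX}. By \eqref{munu}, for $A\subset\cX$,
\[
\xi(A)=\frac1{\nu(\tau)}\,\nu(f_A),\qquad f_A(X)=\int_0^{\tau(X)}\mathbf 1_A\bigl(\theta(\Phi^tX)\bigr)\,dt,
\]
so that, decomposing over $\Gamma$, $\xi(A)=\frac1{\nu(\tau)}\int_\Gamma\zeta_\gamma(A)\,d\hat\lambda(\gamma)$ with $\zeta_\gamma(A)=\hat\nu_\gamma(f_A)$; thus $\zeta_\gamma(A)$ is the $\hat\nu_\gamma$-average of the time a trajectory issued from $\gamma$ keeps its direction in $A$ before the next collision, and measurability of $\gamma\mapsto\zeta_\gamma(A)$ follows from measurability of $\Gamma$, as before.

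The new ingredient is the analysis of $\zeta_\gamma$. For the Gaussian thermostat $p=\|\bp\|$ is constant and $\dot\theta=-E(\sin\theta)/p$ (with $\bE=(E,0)$, $E>0$), so on each of the two arcs $(0,\pi)$ and $(-\pi,0)$ the derivative $\dot\theta$ has constant sign and vanishes only at the endpoints $\{0,\pm\pi\}$; hence along any flight $\theta(\Phi^tX)$ is strictly monotone, stays inside one arc, and drifts toward $0$. On each arc I pass to the linearizing coordinate $u=\frac pE\ln|\tan(\theta/2)|$, in which $\dot u\equiv-1$, so the flight from $X$ sweeps exactly the $u$-interval $[u_0(X)-\tau(X),u_0(X)]$ with $u_0(X)=u(\theta(X))$. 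Consequently $\zeta_\gamma$, pushed to the $u$-line, has density $\hat\nu_\gamma\{X\in\gamma:u_0(X)-\tau(X)\le u\le u_0(X)\}$, which is at most $\hat\nu_\gamma(\gamma)=|\gamma|$ and hence uniformly bounded (h-fibers, being increasing curves of bounded slope, have uniformly bounded length); moreover $\zeta_\gamma$ gives no mass to $\{0,\pm\pi\}$, since a trajectory can sit at a straight direction only if it starts there, and the outgoing direction, having $d\theta/dr=\mathcal K(r)+d\varphi/dr>0$ along any unstable curve ($\mathcal K$ the nonvanishing boundary curvature), equals a given value at only finitely many points of $\gamma$. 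Transporting the density back to the $\theta$-variable multiplies it by $|du/d\theta|=p/(E|\sin\theta|)$, which blows up at $0,\pm\pi$; the point, and the crux of the proof, is that this blow-up cancels. Near $\theta=\pm\pi$ (repelling) the value $\theta$ is reached only near the start of a flight, so the relevant $\hat\nu_\gamma$-measure is controlled by the set of $X$ whose outgoing direction lies within $O(|\pi-\theta|)$ of $\pi$ (finite horizon, $\tau\le\tau_{\max}$, keeps the start and end directions comparable), which is $O(|\sin\theta|)$ by the uniformly positive rate of $\theta(X)$ along $\gamma$ just noted. Near $\theta=0$ (attracting) the value $0$ is reached only near the end of a flight, so the same estimate requires the analogous control on the pre-collision direction along unstable curves; this is the delicate case, where finite horizon together with the bounded geometry of h-fibers (including near grazing collisions, where h-fibers terminate) must be used with care. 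Granting it, $p_\gamma$ is bounded uniformly in $\gamma$, is piecewise continuous, and in particular extends continuously across $0,\pm\pi$ for a.e.\ $\gamma$.

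It remains to check \eqref{E0} and \eqref{E1}, which proceed as in the projection onto $\cD$. Since $\theta$ is monotone along each flight, $p_\gamma$ can be discontinuous at a value $\theta_*\notin\{0,\pm\pi\}$ only if $\theta_*=\theta(X)$ or $\theta_*=\theta(\Phi^{\tau(X)}X)$ for an \emph{endpoint} $X$ of $\gamma$, i.e.\ only if $\gamma$ terminates on $E_{\theta_*}=\{X\in\cM:\theta(X)=\theta_*\text{ or }\theta(\Phi^{\tau(X)}X)=\theta_*\}$. The fact noted above that the only straight trajectories are those with $\theta\in\{0,\pm\pi\}$ makes $E_{\theta_*}$ genuinely one-dimensional, a finite union of curves transverse to the unstable cones, just as $E_x$ was for the projection onto $\cD$; hence it meets the countably many singularity curves on which h-fibers terminate in countably many points, so only countably many h-fibers terminate on $E_{\theta_*}$, and by Lemma~\ref{Lm4.1} the set of these $\gamma$ has $\hat\lambda$-measure zero. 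Together with the continuity of $p_\gamma$ at $0,\pm\pi$ this gives \eqref{E0}; \eqref{E1} follows from \cite[Lemma~3.3]{Ch2} because $E_{\theta_*}$ is a finite union of stable curves, exactly as before. Theorem~\ref{topX} then yields the assertion.

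The main obstacle is the uniform-in-$\gamma$ bound on $p_\gamma$ in the second step, and within it the case $\theta\to0$: one must show that flights whose terminal direction comes within $\eps$ of the attracting straight direction form, on every h-fiber, a set of $\hat\nu_\gamma$-measure $O(\eps)$ with constant uniform in $\gamma$, so as to absorb the $1/|\sin\theta|$ Jacobian. Finite horizon and the nonvanishing curvature are the tools, but making the estimate uniform down to the grazing endpoints of h-fibers — which is also what secures the continuity of $p_\gamma$ at the straight directions — is the technically delicate point of the argument.
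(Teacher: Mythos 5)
Your overall scaffold (decompose over $\Gamma$, get uniformly bounded densities $p_\gamma$ on $\cX$, check \eqref{E0} and \eqref{E1}, quote Theorem~\ref{topX}) matches the paper, but the central analytic step is left unproved, and you say so yourself: the uniform bound on $p_\gamma$, and its continuity, near the straight directions $\theta\in\{0,\pm\pi\}$ is only ``granted''. Your difficulty is an artifact of disintegrating $\zeta_\gamma$ along individual trajectories: a single flight contributes a factor $1/|\dot\theta|=p/(E|\sin\theta|)$, which is singular, and you then hope to cancel it against the smallness of the set of starting points on $\gamma$ whose flight reaches directions near $0$ or $\pm\pi$ --- an estimate you cannot close near the attracting direction $\theta=0$. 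The paper avoids this singular Jacobian altogether. It works with the full trace $S_\gamma=\{\Phi^t(X):X\in\gamma,\ 0\le t\le\tau(X)\}$, a $2$-dimensional surface in $\Omega$ carrying a measure with density bounded uniformly in $\gamma$ (w.r.t.\ surface area), and uses that h-fibers are \emph{strongly divergent} wave fronts: the front curvature is bounded between positive global constants at every moment of the flight, so $\theta$ varies \emph{across} the front (not along the flow) at a uniformly positive rate; equivalently, the angle between $S_\gamma$ and the $\theta$-axis is bounded away from $\pi/2$ by a global constant. Together with uniform bounds on the area and diameter of $S_\gamma$, the coarea/projection argument then gives a density of $\hat\mu_\gamma$ on $\cX$ that is uniformly bounded, with no blow-up to cancel. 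This transversality of the swept surface to the $\theta$-direction is exactly the ingredient missing from your argument, and it is also what settles the behaviour at $\theta=0,\pm\pi$ that you flag as delicate.

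A secondary flaw is in your verification of \eqref{E0}: monotonicity of $\theta$ along each flight does \emph{not} reduce discontinuities at $\theta_*\notin\{0,\pm\pi\}$ to fibers \emph{terminating} on your set $E_{\theta_*}$. A discontinuity also occurs when a positive-length \emph{interior} piece of $\gamma$ has terminal direction identically $\theta_*$, i.e.\ when $\gamma$ partially coincides with $\cF^{-1}(H_{\theta_*})$, where $H_{\theta_*}$ is the set of collision points whose incoming velocity makes angle $\theta_*$ with $\bE$; this is not detected by looking at the endpoints of $\gamma$. The paper handles this case separately (the boundary piece $S_\gamma^2$): since $H_{\theta_*}$, and hence $\cF^{-1}(H_{\theta_*})$, is a finite union of smooth curves, only countably many h-fibers can partially coincide with it, and countable families are $\hat\lambda$-null by Lemma~\ref{Lm4.1}. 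In addition, the possibility that $\theta$ is constant along a whole trajectory piece on the lateral boundaries of $S_\gamma$ (your straight-trajectory case $\theta\in\{0,\pm\pi\}$) must be, and in the paper is, folded into \eqref{E0} rather than dismissed via an unproved continuity of $p_\gamma$ at those values. Your treatment of \eqref{E1} is essentially the paper's and is fine once the density bound is in place.
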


\begin{proof}
For every set $A \subset \Omega$ we have:
\[
  \mu(A) = \frac{1}{\nu(\tau)}
    \int_\Gamma \hat{\nu}_\gamma (f_A)\, d\hat{\lambda}(\gamma), \, \text{ where}
\]
\[
    f_A(X) = \int_0^{\tau(X)} \mathbf{1}_A(\Phi^t(X))\,dt.
\]
Denote $\mu_\gamma(A) := \hat{\nu}_\gamma (f_A)$ and let
$\hat{\mu}_\gamma$ be the projection of $\mu_\gamma$ on $\cX$. Then for
any set $B \subset \cX$
\[
 \xi(B) = \frac{1}{\nu(\tau)}
    \int_\Gamma \hat{\mu}_\gamma (B)\, d\hat{\lambda}(\gamma).
\]
Let $S_\gamma := \{ \Phi^t(X) \colon X \in \gamma \text{ and } 0\leq t
\leq \tau(X) \}$ be again the \textit{trace} of $\gamma$. Then
$\mu_\gamma(A)$ is supported on $S_\gamma$ and has continuous and
uniformly (in $\gamma$) bounded density on it.

Since $S_\gamma$ is a compact smooth 2-dimensional  manifold in the
phase space $\Omega$ and h-fibers correspond to \textit{strongly
divergent families of trajectories}\footnote{ Let $\kappa$ denote the
curvature of a cross-section of $S_\gamma$ orthogonal to the flow; then
\textit{strong divergence} means $0< \kappa_{\rm{min}} < \kappa <
\kappa_{\rm{max}} < \infty $ for some global constants
$\kappa_{\rm{min}}$ and $\kappa_{\rm{max}}$, see \cite[Secion 3]{Ch1}.
}, the angle between $S_\gamma$ and the $\theta$-axis is bounded above
by a global constant that is less than $\pi/2$. Therefore,
$\hat{\mu}_\gamma$ has a density on $\cX$. Moreover, the area and the
size (diameter) of $S_\gamma$ are uniformly (in $\gamma$) bounded above
and below by positive constants. Thus the density of $\hat{\mu}_\gamma$
is bounded above uniformly in $\gamma$.

The density of $\hat{\mu}_\gamma$ may have a discontinuity at $\theta_0 \in
\cX$ only if $S_\gamma$ has a curve on its boundary where $\theta
\equiv \theta_0$. The boundary of $S_\gamma$ consists of four parts:
\begin{align*}
  S_\gamma^1 & := \{ \Phi^t(X) \colon X \in \gamma \text{ and } t = 0 \}, \\
  S_\gamma^2 & := \{ \Phi^t(X) \colon X \in \gamma \text{ and } t = \tau(X) \}, \\
  S_\gamma^3 & := \{ \Phi^t(X) \colon X=X_1 \text{ and } 0 \leq t \leq \tau(X) \}, \\
  S_\gamma^4 & := \{ \Phi^t(X) \colon X=X_2 \text{ and } 0 \leq t \leq \tau(X) \},
\end{align*}
where $X_1$ and $X_2$ are the endpoints of $\gamma$.

The angle $\theta$ cannot be constant on any sub-curve of $S_\gamma^1$,
because $\gamma$ is an increasing curve in the $(r,\varphi)$
coordinates. On $S_\gamma^3$ or $S_\gamma^4$, it can be a constant only
if the whole trajectory is parallel to $\bE$, i.e., $\theta\in \{0,\pm
\pi\}$. The trajectories where $\theta\equiv 0$ or $\theta \equiv \pm
\pi$ make a finite union of flat wave fronts, with at most countably
many intersections with singularity curves and preimages of $\{\varphi
= \pm \pi/2\}$, on which h-fibers terminate. Hence there are at most
countably many $\gamma$'s for which $S_\gamma^3$ or $S_\gamma^4$ has a
sub-curve where $\theta\equiv 0$ or $\theta \equiv \pm \pi$.

It may happen that $S_\gamma^2$ contains a subset of positive length on
which $\theta\equiv \,$const. Note, however, that for every $\theta_0
\in \cX$ the set
$$
  H_{\theta_0} \colon = \{ X \in \cM \colon P(\Phi^{0-}(X))=\theta_0 \}
$$
of reflection points where the ``incoming'' velocity vector makes angle
$\theta_0$ with the field $\bE$, is a finite union of smooth curves, so
its preimage $\cF^{-1}(H_{\theta_0})$ is a finite union of smooth
curves, too. It is now clear that there could be at most countably many
$\gamma$'s that partially coincide with $\cF^{-1}(H_{\theta_0})$.

Overall, for every $\theta_0 \in \cX$ there are at most countably many
$\gamma$'s for which the boundary of $S_\gamma$ contains a curve on
which $\theta \equiv \theta_0$. This guarantees (\ref{E0}).

To satisfy (\ref{E1}), for every $\theta_0 \in \cX$ consider a set
$$
  E_{\theta_0} \colon= \{ X \in \cM \colon P(\Phi^{0+}(X)) = \theta_0 \}
$$
of reflection points where the ``outgoing'' velocity vector makes angle
$\theta_0$ with the field $\bE$. It consists of a finite number of
decreasing curves, and its preimage $\cF^{-1} (E_{\theta_0})$ consists
of a finite number of stable curves. Consider two sets
\[
   \Gamma_{\theta_0} \colon= \{\gamma \colon \gamma \text{ intersects } E_{\theta_0} \}
   \text{ \, and \, }
   \Gamma_{\theta_0}' \colon= \{\gamma \colon \gamma \text{ terminates on } E_{\theta_0} \}.
\]
It follows from \cite[Lemma 3.3]{Ch2} that $\Gamma_{\theta_0}$ has
positive $\hat{\lambda}$-measure in $\Gamma$, and $\Gamma_{\theta_0}'$
is at most countable, because it has at most countably many
intersection points with singularity curves and preimages of $\{
\varphi = \pm \pi/2 \}$, on which h-fibers terminate. Therefore, $\hat
\lambda (\Gamma_{\theta_0} \setminus \Gamma_{\theta_0}') > 0$. Observe
that for every $\gamma \in \Gamma_{\theta_0} \setminus
\Gamma_{\theta_0}'$ the density of the projected measure
$\hat{\mu}_\gamma$ is positive at $\theta_0$. This implies (\ref{E1}).

Now the result follows from Theorem~\ref{topX}.
\end{proof}

Figure~\ref{pic:angular} shows the density of $\xi$ constructed via
computer simulation for a system under a small external force directed
horizontally to the right, with Gaussian thermostat.

\begin{figure}[h!]
\centering
\includegraphics[clip,width=0.48\textwidth]{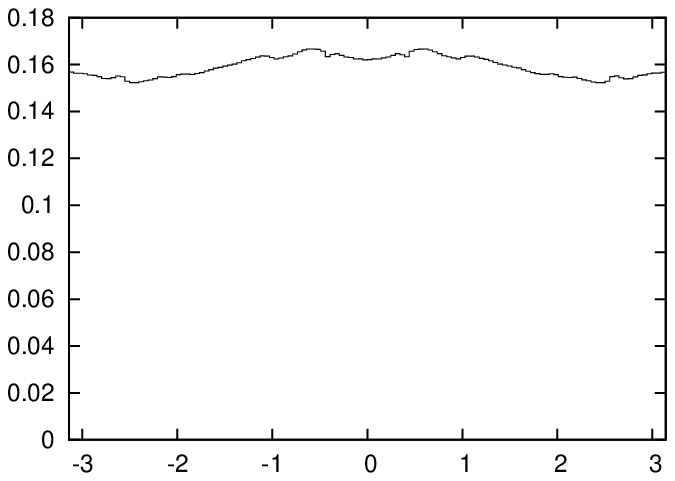}
\includegraphics[clip,width=0.48\textwidth]{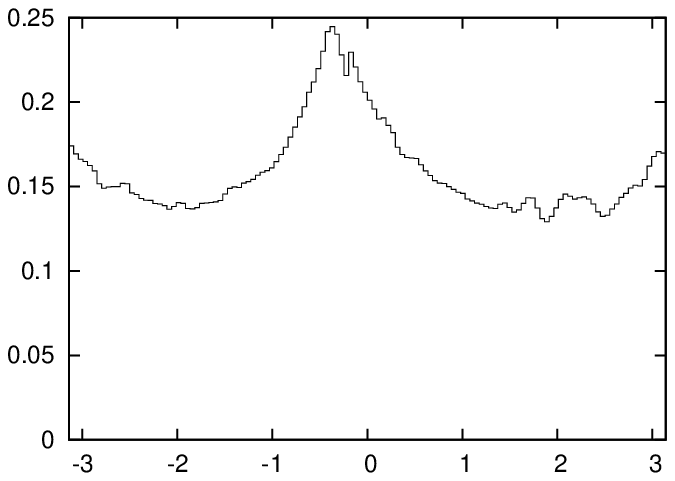}
\caption{
Densities of angular distributions for a Gaussian thermostated force
with the same small field on the tables A and B
(see Figure~\ref{pic:tables}).
\label{pic:angular}}
\end{figure}

\section{Linear response}

Suppose that the force $\mathbf{F}=\mathbf{F}_\eps$ is parameterized
by a parameter $\eps \in [0, \bar{\eps}]$. More precisely,
Assumption~B in section~\ref{sec:model} now takes form
\[
  \max(|h|,|h_x|,|h_y|,|h_\theta| ) \leq C_0\varepsilon
\]
with some $C_0>0$ and $B_0>0$ independent of $\varepsilon$. We will
add the subscript $\eps$ to our symbols to emphasize the dependence
of the dynamics on $\eps$. Let $g_\eps=d\mathcal{F}_\eps^{-1}\nu_0 /
d \nu_0$ be the Jacobian of $\mathcal{F}_\eps$ with respect to the
unperturbed billiard invariant measure $\nu_0$. Denote $\Delta_\eps
\colon= (1-g_\eps)/\eps$ for $\eps>0$ and assume that $\Delta_0
\colon=\lim_{\eps \to 0} (1-g_\eps)/\eps$ exists almost everywhere
with respect to $\nu_0$.

We will make a rather technical, but not too restrictive assumption
on functions $\Delta_\eps$, namely that each h-fiber of the map
$\mathcal{F}_0$ can be divided into no more than $N_\Delta$ pieces
(for some constant $N_\Delta$), on which they are H\"older continuous
with a constant $C_\Delta>0$ and exponent $1/6$,
i.e.\ $|\Delta_\eps(X) - \Delta_\eps(Y)| \leq C_\Delta |X-Y|^{1/6}$,
and that $|\Delta_\eps|<C_\Delta$. We need exponent $1/6$ to connect
$\Delta_\eps$ to a proper standard family, as it will be shown
further.

For example, for the Gaussian thermostat in Section \ref{sec:gt},
the function $\Delta_\eps$ satisfies our assumptions with H\"older exponent $1/2$, and constant $C_\Delta$ determined by the maximum and minimum curvature of the obstacles; see \cite[Sect.~8]{CD09a}.

Assume that we are observing a function $f_\eps$ on $\mathcal{M}$
that may also change with $\eps$. Let $f_\eps$ be bounded uniformly
in $\eps$ and dynamically H\"older continuous with respect to
$\mathcal{F}_\eps$, with constants $C'=C_{f_\eps}$ and
$\theta'=\theta_{f_\eps}$ independent from $\eps$. Suppose the limit
$f_0:=\lim_{\eps \to 0} f_\eps$ exists almost everywhere with
respect to $\nu_0$.

\begin{theorem}
\begin{equation}\label{linResp}
\nu_\eps(f_\eps) - \nu_0(f_\eps) =
\eps \sum_{k=1}^\infty \nu_0
\left( (f_0 \circ \mathcal{F}_0^k)\Delta_0 \right) + o(\eps).
\end{equation}
\end{theorem}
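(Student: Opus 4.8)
The plan is to expand the difference $\nu_\eps(f_\eps) - \nu_0(f_\eps)$ as a telescoping series in the Jacobian $g_\eps$, and then pass to the limit $\eps \to 0$ term by term, using exponential decay of correlations to control the tail uniformly in $\eps$. First I would write $\nu_\eps$ in terms of $\nu_0$ using the transfer operator: since $\nu_\eps$ is $\mathcal{F}_\eps$-invariant and $g_\eps = d(\mathcal{F}_\eps^{-1}\nu_0)/d\nu_0$, one has for a test function $f$ that $\nu_\eps(f) = \lim_{n\to\infty} \nu_0\bigl(f \cdot \prod_{k=0}^{n-1} (g_\eps \circ \mathcal{F}_\eps^{k})\bigr)$ or, more usefully, the identity $\nu_0(f \circ \mathcal{F}_\eps^n) - \nu_0(f) = -\sum_{k=1}^{n}\nu_0\bigl((f\circ\mathcal{F}_\eps^k)(1-g_\eps)\bigr) + (\text{something that vanishes})$ obtained by repeatedly inserting $1 = g_\eps + (1-g_\eps)$ and using the change-of-variables $\nu_0(h \circ \mathcal{F}_\eps) = \nu_0(h \, g_\eps)$. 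Taking $n \to \infty$, if $\nu_0(f\circ\mathcal{F}_\eps^n) \to \nu_\eps(f)$ (which follows from mixing / equidistribution of the standard family $\Gamma$ under $\mathcal{F}_\eps$, since $\nu_0$-typical points equidistribute to $\nu_\eps$ -- this is where the perturbation theory of standard families enters), we obtain
\[
\nu_\eps(f_\eps) - \nu_0(f_\eps) = \sum_{k=1}^\infty \nu_0\bigl((f_\eps \circ \mathcal{F}_\eps^k)(1 - g_\eps)\bigr) = \eps \sum_{k=1}^\infty \nu_0\bigl((f_\eps \circ \mathcal{F}_\eps^k)\Delta_\eps\bigr).
\]

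The second step is to control each summand and its tail uniformly in $\eps$. The integrand $(f_\eps\circ\mathcal{F}_\eps^k)\Delta_\eps$ should be estimated via decay of correlations for $\mathcal{F}_\eps$: $\Delta_\eps$ is (piecewise) Hölder-$1/6$ with uniform constants, $f_\eps$ is dynamically Hölder with uniform constants, and $\nu_0(\Delta_\eps)$ can be related to $\nu_0(1-g_\eps)/\eps \to 0$ as $\eps\to 0$ (in fact $\nu_0(1-g_\eps) = 0$ for each $\eps$ since $g_\eps$ is a Jacobian, so $\nu_0(\Delta_\eps)=0$). Hence each term is of the form $\nu_0\bigl((f_\eps\circ\mathcal{F}_\eps^k)\psi_\eps\bigr)$ with $\nu_0(\psi_\eps)=0$, and the correlation bound gives $|\nu_0((f_\eps\circ\mathcal{F}_\eps^k)\Delta_\eps)| \le C\vartheta^k$ with $C,\vartheta<1$ independent of $\eps$ -- this is why the exponent $1/6 = \log_\Lambda \theta_r^{-1}$ matters: it lets us view $\Delta_\eps$ as distributing mass over a proper standard family so that the equidistribution/decay machinery of \cite{Ch2} applies with uniform constants. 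The uniform summable tail lets me truncate the series at some $K$ independent of $\eps$ with error $o(1)$ uniformly.

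The third step is the term-by-term passage to the limit for the finitely many retained terms $k = 1,\dots,K$: since $f_\eps \to f_0$ and $\Delta_\eps \to \Delta_0$ $\nu_0$-a.e., and $\mathcal{F}_\eps \to \mathcal{F}_0$ in a suitable sense (the maps agree away from a small-measure neighborhood of the singularity/collision geometry, which does not depend on the force in the $(r,\varphi)$ coordinates -- the collision space $\cM$ is force-independent), dominated convergence gives $\nu_0\bigl((f_\eps\circ\mathcal{F}_\eps^k)\Delta_\eps\bigr) \to \nu_0\bigl((f_0\circ\mathcal{F}_0^k)\Delta_0\bigr)$, provided the compositions $\mathcal{F}_\eps^k \to \mathcal{F}_0^k$ $\nu_0$-a.e., which holds since $\nu_0$ gives zero mass to the (codimension-one) singularity set and iterates of it. Combining: $\eps^{-1}(\nu_\eps(f_\eps)-\nu_0(f_\eps)) \to \sum_{k=1}^\infty \nu_0\bigl((f_0\circ\mathcal{F}_0^k)\Delta_0\bigr)$, which is the claim.

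The main obstacle I anticipate is the \emph{uniformity in $\eps$} of the decay-of-correlations estimate for $\nu_0\bigl((f_\eps\circ\mathcal{F}_\eps^k)\Delta_\eps\bigr)$: one must argue that the Hölder-$1/6$ function $\Delta_\eps$, multiplied into the reference measure $\nu_0$ and pushed forward under $\mathcal{F}_\eps$, generates a proper standard family with a proper constant $C_p$ that does not blow up as $\eps \to 0$, and then invoke equidistribution of proper standard families under $\mathcal{F}_\eps$ with rate $\vartheta^k$ uniform in $\eps$. This requires the growth lemma and the distortion/proper-family bounds of \cite{Ch1,Ch2,CD09} to be stated with constants depending only on $B_*$ and $\delta_*$ (which Assumption~B is precisely designed to provide), plus the bookkeeping that the assumed subdivision of each h-fiber into $\le N_\Delta$ Hölder pieces keeps $\cZ$ controlled. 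A secondary technical point is justifying $\nu_0(f\circ\mathcal{F}_\eps^n) \to \nu_\eps(f)$ and the interchange of the $n\to\infty$ limit with the telescoping identity; this again follows from equidistribution of the standard family of h-fibers of $\mathcal{F}_0$ (which is proper) under the perturbed map $\mathcal{F}_\eps$, but needs the perturbed SRB measure $\nu_\eps$ to be the unique limit, as guaranteed by \cite{Ch1}.
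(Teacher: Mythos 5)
Your proposal follows essentially the same route as the paper: the Kawasaki-type formula of \cite{Ch2} (which you rederive by telescoping rather than citing), a uniform-in-$\eps$ exponential bound on $\nu_0\bigl((f_\eps\circ\mathcal{F}_\eps^k)\Delta_\eps\bigr)$ obtained by converting $\Delta_\eps$ into proper standard families and invoking the equidistribution property \cite[Proposition 2.2]{Ch2}, followed by a dominated-convergence passage to the limit term by term. The only implementation detail you leave implicit is how the signed, mean-zero $\Delta_\eps$ becomes a family of densities: the paper writes $\Delta_\eps=\Delta_\eps^+-\Delta_\eps^-$ with $\Delta_\eps^{\pm}=1\pm\Delta_\eps\mathbf{1}_{\pm\Delta_\eps>0}$, checks that $\rho_\gamma\Delta_\eps^{\pm}$ is regular on the $\leq N_\Delta$ H\"older pieces of each h-fiber, and bounds $\cZ_{\cG_\eps^{\pm}}\leq N_\Delta e^{C_r}\cZ_\cG$, which supplies exactly the uniform properness constant you identified as the main obstacle.
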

\begin{remark}
 It may not be true that $\nu_0(f_\eps) = \nu_0(f_0) + C\eps + o(\eps)$,
 this part of response is determined by the character of $f_\eps$.
\end{remark}

\begin{proof}
We start with a Kawasaki-type formula \cite[Eq.~(2.15)]{Ch2}:
\[
\nu_\eps(f_\eps) = \nu_0(f_\eps) + \sum_{k=1}^\infty \eps \, \nu_0
\left[\left( f_\eps \circ \mathcal{F}_\eps^k \right)\Delta_\eps\right].
\]
Here terms of the series decay exponentially fast and uniformly in
$\eps$. We are going to factor $\eps$ out of the series and prove
that we still have a series with terms converging to zero
exponentially and uniformly in $\eps$.

Note that the integrands $\left( f_\eps \circ \mathcal{F}_\eps^k
\right)\Delta_\eps$ are bounded and pointwise converge to $\left( f_0
\circ \mathcal{F}_0^k \right)\Delta_0$, as $\eps\to 0$. Therefore it is
enough to show that there exist constants $C>0$ and $0<\theta<1$ so
that $\left|\nu_0\left[ \left( f_\eps \circ \mathcal{F}_\eps^k
\right)\Delta_\eps\right] \right| \leq C \theta^k$ for every $k$.

We decompose $\Delta_\eps$ into $\Delta_\eps^+ - \Delta_\eps^-$ where
$\Delta_\eps^+=1+\Delta_\eps \mathbf{1}_{\Delta_\eps>0}$ and
$\Delta_\eps^-=1-\Delta_\eps \mathbf{1}_{\Delta_\eps<0}$. Then
$\nu_0(\Delta_\eps^+)=\nu_0(\Delta_\eps^-)$ since
$\nu_0(\Delta_\eps)=0$.

Let $\gamma$ be a piece of an h-fiber corresponding to the unperturbed
dynamics, on which $\Delta_\eps$ is H\"older continuous with constant
$C_\Delta$ and exponent $1/6$,
and $\gamma(X,Y)$ be its subcurve that terminates at points
$X$ and $Y$. Then
\begin{align*}
 \left|\ln \frac{\Delta_\eps^{\pm}(X)}{\Delta_\eps^{\pm}(Y)}\right|
 &=\left| \ln \left( 1+
   \frac{\Delta_\eps^{\pm}(X)-\Delta_\eps^{\pm}(Y)}{\Delta_\eps^{\pm}(Y)}
   \right) \right| \\
 &\leq \left|\Delta_\eps^{\pm}(X)-\Delta_\eps^{\pm}(Y)\right|
 \leq C_\Delta |\gamma(X,Y)|^{1/6} \\
 &\leq C_\Delta C_1 \Lambda^{-\frac{1}{6} s_+(X,Y)}
 =C_\Delta C_1 \theta_r^{s_+(X,Y)}
\end{align*}
where constant $C_1$ is determined by the billiard
geometry, see \cite[Formula (5.32)]{CM06}, and
$\theta_r$ comes from (\ref{eq:regular}).

If $\rho_{\gamma}$ is the density of the conditional measure induced
by $\nu_0$ on $\gamma$, then
\[
 \left| \ln \frac{\rho_{\gamma}(X)\Delta_\eps^\pm(X)}{\rho_{\gamma}(Y)\Delta_\eps^\pm(Y)}
 \right|
 \leq (C_r + C_\Delta C_1) \theta_r^{s_+(X,Y)},
\]
where constant $C_r$ also comes from (\ref{eq:regular}). We are free to
choose $C_r$ arbitrarily large, and replacing $C_r$ with $C_r +
C_\Delta C_1$ we make $\rho_{\gamma} \Delta_\eps^\pm$ a regular
function on $\gamma$.

Thus the densities $\rho_{\gamma} \Delta_\eps^\pm$ on the pieces of
h-fibers, where $\Delta_\eps$ is continuous, specify standard
families $\cG_\eps^\pm$ such that
$\nu_{\cG_\eps^\pm}(h) = \nu_0(\Delta_\eps^\pm h)
/\nu_0(\Delta_\eps^\pm)$ for every function $h$.

The standard family $\cG$ consisting of all h-fibers with their conditional measures corresponding to the unperturbed dynamics, is proper, i.e., 
$\cZ_\cG<C_p$; see equation (\ref{eq:zg}). We want to show that
$\cG_\eps^\pm$ is also proper.

To each standard pair $(\gamma, \nu_\gamma) \in \cG$ there correspond
at most $N_\Delta$ standard pairs $(\gamma_i, \nu_{\gamma_i})$
in $\cG_\eps^\pm$ with regular densities. Then, in the notations of
equation (\ref{eq:zg}),
\[
  \frac{\sum_i \nu_{\gamma_i}
    \left\{ X \colon r_{\cG_\eps^\pm} (X)<\eps \right\}
    }{\eps}
  \leq
    N_\Delta e^{C_r}
    \frac{\nu_{\gamma}
    \left\{ X \colon r_{\cG} (X)<\eps \right\}
    }{\eps}.
\]
Therefore $\cZ_{\cG_\eps^\pm} \leq N_\Delta e^{C_r} \cZ_\cG \leq N_\Delta e^{C_r} C_p$.
We are now free to choose $C_p$ large enough to make the standard families $\cG_\eps^\pm$ proper.

By the equidistribution property \cite[Proposition 2.2]{Ch2}
\[
    \left|
    \nu_{\mathcal{G}_\eps^\pm}\left(f_\eps \circ \mathcal{F}_\eps^k
    \right) - \nu_\eps\left(f_\eps\right)
    \right| \leq C_8 \theta_8^k \text{,}
\]
where $C_8>0$ and $0<\theta_8<1$ are independent from $\eps$.
Coupled with
\[
    \nu_0 \left[\left(f_\eps \circ \mathcal{F}_\eps^k\right)
    \Delta_\eps
    \right] =
    \nu_0\left(\Delta_\eps^+\right) \left(
    \nu_{\mathcal{G}_\eps^+}
        \left(f_\eps \circ \mathcal{F}_\eps^k\right)
    -
    \nu_{\mathcal{G}_\eps^-}
        \left(f_\eps \circ \mathcal{F}_\eps^k\right)
    \right)
\]
this gives
\[\nu_0 \left[\left(f_\eps \circ \mathcal{F}_\eps^k\right)
    \Delta_\eps
    \right]
    \leq
    2 \nu_0\left(\Delta_\eps^+\right)C_8 \theta_8^k \text{.}
\]
We note that $\nu_0\left(\Delta_\eps^+\right)\leq1+C_\Delta<\infty$,
which completes the proof.
\end{proof}

We proved the linear response formula for the map $\mathcal{F}_\eps$
but not for the flow, because no estimates on correlations for
perturbed billiard flows are available.

\section*{Acknowledgement}
The authors are partially supported by NSF grant DMS-0969187. We are
grateful to the Alabama supercomputer administration for computational
resources. We are grateful to the anonymous reviewers for the very
thorough job they have done. The paper significantly improved due to
their suggestions.

\end{document}